\definecolor{darkblue}{rgb}{0,0,0.4}
\newtheorem{thm}{Theorem}[section]         
\newtheorem{lem}[thm]{Lemma}
\newtheorem{defn}[thm]{Definition} 
\newcommand{\R}{\mathbb{R}}
\newcommand{\Z}{\mathbb{Z}}
\newcommand{\F}{\mathbb{F}}
\newcommand{\mc}{\mathcal}
\newcommand{\mb}{\mathbb}
\newcommand{\mf}{\mathfrak}
\newcommand{\wt}{\widetilde}
\newcommand{\del}{\partial}
\newcommand{\sbs}{\subset}
\newcommand{\sm}{\setminus}
\newcommand{\al}{\alpha}
\newcommand{\be}{\beta}
\newcommand{\Zoltan}{Zolt\'{a}n}
\newcommand{\Szabo}{Szab\'{o}}
\newcommand{\Ozsvath}{Ozsv\'{a}th}
\newcommand{\Andras}{Andr\'{a}s}
\newcommand{\Juhasz}{Juh\'{a}sz}
\newcommand{\ith}{^{\text{th}}}
\newcommand{\Mor}{\operatorname{Mor}}
\newcommand{\Ob}{\operatorname{Ob}}
\newcommand{\HFK}{\mathit{HFK}}
\begin{document}

\title{Grid diagrams and shellability} 
\author{Sucharit Sarkar} 
\address{Department of Mathematics, Columbia University, New York, NY 10027} 
\email{\href{mailto:sucharit.sarkar@gmail.com}{sucharit.sarkar@gmail.com}} 

\keywords{knot Floer homology; shellable poset; grid diagram; flow category} 
\subjclass[2010]{\href{http://www.ams.org/mathscinet/search/mscdoc.html?code=57M25,06A07,57R58}{57M25,     06A07, 57R58}}

\date{}

\begin{abstract}
  We explore a somewhat unexpected connection between knot Floer   homology and shellable posets, via grid diagrams. Given a grid   presentation of a knot $K$ inside $S^3$, we define a poset 
which has   an associated chain complex whose homology is the knot Floer   homology of $K$. We then prove that the closed intervals of this   poset are shellable. This allows us to 
combinatorially associate a PL flow category to a grid diagram. \end{abstract}

\maketitle

\section{Introduction}

Heegaard Floer homology is a powerful invariant for closed oriented $3$-manifolds, introduced by Peter \Ozsvath{} and \Zoltan{} \Szabo{} \cite{POZSz,POZSzapplications}. This invariant was later generalized by them \cite{POZSzknotinvariants} and independently by Jacob Rasmussen \cite{JR} to an invariant called knot Floer homology for knots in $3$-manifolds, which was later further generalized to include the case of links \cite{POZSzlinkinvariants}.  We will mostly be concerned with $\mathit{HFK}^{-}(S^3,K;\F_2)$, the minus version of knot Floer homology of a knot $K\sbs S^3$ with coefficients in $\F_2$. There are two gradings $M$ and $A$ on $\HFK^{-}(S^3,K;\F_2)$ called the Maslov grading and the Alexander grading. The $\F_2$-module $\mathit{HFK}^{-}(S^3,K;\F_2)$ is obtained as the homology of a certain chain complex, and the Maslov grading is in fact the homological grading.

The strength of knot Floer homology can be demonstrated by the following few theorems. Peter \Ozsvath{} and \Zoltan{} \Szabo{} proved that a version of knot Floer homology detects the $3$-ball genus of the knot \cite{POZSzgenusbounds}, and a version of the link Floer homology describes the Thurston polytope of the link \cite{POZSzthurstonnorm}. Yi Ni showed that knot Floer homology can also be used to determine if a knot is fibered \cite{YN}. Peter \Ozsvath{} and \Zoltan{} \Szabo{} also constructed an invariant $\tau$ coming from knot Floer homology which gives a lower bound on the $4$-ball genus of a knot \cite{POZSz4ballgenus}.

Based on a grid presentation of a knot $K\sbs S^3$, \cite{CMPOSS} constructs a chain complex over $\mathbb{F}_2$, whose homology agrees with $\mathit{HFK}^{-}(S^3,K;\F_2)$. A grid diagrams is a way of representing a knot $K$ in $S^3$. Grid diagrams were first introduced as arc-presentations in \cite{gridHB}, and they are equivalent to the square-bridge positions of \cite{gridHCL}, the Lengendrian realisations of \cite{gridHM}, the asterisk presentations of \cite{gridLN} and the fences of \cite{gridLR}. Grid diagrams have been explored in great detail in \cite{gridPC}, and the chain complexes coming from grid diagrams have been studied extensively in \cite{CMPOZSzDT}.

In this short paper, we will explore an unexpected connection between knot Floer homology, grid diagrams, shellability and flow categories. Shellability is a fairly strong condition for partially ordered sets, see \cite{shellAB, shellABcwcomplexes, shellABMW}. We will prove that the grid chain complex comes from a poset whose every closed interval is shellable.
Shellable posets carry many rich combinatorial structures, and the author hopes that some of these structures might lead to a better understanding of knot Floer homology and to new knot invariants.

In particular, shellability forces many geometric constraints on the order complex of the poset; namely, the order complex of a thin (resp. subthin) shellable poset is a sphere (resp. a ball). This allows us to construct a flow category, in the sense of \cite{generalRCJJGS}, from a grid diagram. \cite{generalRCJJGS} gives a recipe for constructing a stable homotopy type, starting from a flow category and a coherent choice of framings of the tangent bundles of the various manifolds that appear. Thus, in order to construct a stable homotopy type from a grid presentation of a knot, we need to choose coherent framings of the tangent bundles. Therefore, it will be interesting to investigate what additional combinatorial properties we need on the posets in order to construct such coherent framings, and whether the posets coming from grid diagrams satisfy those propeties.

\subsection*{Acknowledgment}
A part of the work was done when the author was supported by Princeton Honorific Fellowship and partially supported by the Princeton Centennial Fellowship, and a part of the work was done when he was fully supported by the Clay Research Fellowship. He is grateful to Boris Bukh and Sarah Rasmussen for introducing him to shellable posets. He would also like to thank Chris Douglas, \Andras{} \Juhasz{}, Robert Lipshitz, Ciprian Manolescu, Peter \Ozsvath{}, Jacob Rasmussen and Dylan Thurston for some interesting suggestions and many helpful discussions. Finally, he would like to thank \Zoltan{} \Szabo{} for introducing him to the fascinating world of Heegaard Floer homology in the first place.

\section{Partially ordered sets}\label{sec:posets}

In this section we give a brief overview of some well-known concepts related to partially ordered sets and shellability, following \cite{shellAB}.  A set $P$ with a binary relation $\preceq$ is a \emph{partially ordered set} if $a\preceq b,b\preceq c\Rightarrow a\preceq c$ and $a\preceq b,b\preceq a\Leftrightarrow a=b$. If $a\preceq b, a\neq b$, then we often say that $a$ is less than $b$ and write $a\prec b$.  We also often abbreviate partially ordered sets as \emph{posets}.  A poset $P$ gives rise to the small category $\mc{C}(P)$, whose objects are the elements of $P$, and the set of morphisms from $x$ to $y$ is non-empty if and only if $y\preceq x$, and in that case there is a unique morphism.

We say that $b$ \emph{covers} $a$, and write $a\leftarrow b$ if $a\prec b$ and $\nexists z, a\prec z\prec b$. If $\nexists z, b\prec z$, then we say that $b$ is a \emph{maximal element}. \emph{Minimal   elements} are defined similarly. If $b$ is covered by a maximal element, we say that $b$ is a \emph{submaximal element}.

Any subset of a poset has an induced partial order. A subset $C\subseteq P$ is called a \emph{chain} if the induced order on $C$ is a total order. Given a poset $P$, we can create another poset $B(P)$, called the \emph{barycentric subdivision} of $P$, whose elements are the chains of $P$, partially ordered by inclusion. \emph{Maximal   chains} and \emph{submaximal chains} of $P$ are the maximal elements and submaximal elements of $B(P)$ respectively. The \emph{length} of a chain is the cardinality of the chain considered just as a set.

The \emph{Cartesian product} of two posets $P$ and $Q$ is defined as the poset $P\times Q$, whose elements are pairs $(p,q)$ with $p\in P$ and $q\in Q$, and we declare $(p',q')\preceq (p,q)$ if and only if $p'\preceq p$ in $P$ and $q'\preceq q$ in $Q$. The \emph{order   complex} of a poset $P$ is the simplicial complex $X(P)$, whose $k$-simplices are chains of length $(k+1)$. The boundary maps are defined naturally. For any finite poset $P$, the simplicial complex $X(B(P))$ is the (first) barycentric subdivision of the simplicial complex $X(P)$. For any two finite posets $P$, $Q$, the space $X(P)\times X(Q)$ is naturally homeomorphic to the simplicial complex $X(P\times Q)$.

We define a \emph{closed interval} $[a,b]$ as $\{z\in P\mid a\preceq z\preceq b\}$. The other types of intervals are $(a,b)=\{z\in P\mid a\prec z\prec b\}$, $[a,b)=\{z\in P\mid a\preceq z\prec b\}$, $(a,b]=\{z\in P\mid a\prec z\preceq b\}$, $(-\infty,b]=\{z\in P\mid z\preceq b\}$, $(-\infty,b)=\{z\in P\mid z\prec b\}$, $[a,\infty)=\{z\in P\mid a\preceq z\}$, $(a,\infty)=\{z\in P\mid a\prec z\}$ and $(-\infty,\infty)=P$. A poset is said to be \emph{graded} if in every interval, all the maximal chains have the same length, in which case the common length is known as the length of the interval. A graded poset is said to be \emph{thin}, if every submaximal chain is covered by exactly $2$ maximal chains.  A graded poset is \emph{subthin} if it is not thin, and every submaximal chain is covered by at most $2$ maximal chains.

A graded poset is said to be \emph{shellable}\phantomsection\label{shellable} if the maximal chains have a total ordering $\leq$, such that $\mathfrak{m}_i < \mathfrak{m}_j\Rightarrow \exists \mathfrak{m}_k < \mathfrak{m}_j$ and $\exists x\in\mathfrak{m}_j$ such that $\mathfrak{m}_i\cap\mathfrak{m}_j\subseteq\mathfrak{m}_k \cap\mathfrak{m}_j=\mathfrak{m}_j\setminus\{x\}$. A more geometric way of saying this is the following. A graded poset $P$ is shellable if the maximal dimensional simplices of its order complex $X(P)$ can be totally ordered in some way, such that each such simplex intersects the union of the smaller such simplices in a non-empty union of maximal dimensional faces on its boundary.

\begin{thm} \cite{shellAB} \label{thm:shellbasic}
If a finite poset $P$ is shellable, then every interval of $P$ is shellable. Furthermore, the barycentric subdivision $B(P)$ is also shellable.
\end{thm}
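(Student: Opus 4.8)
The plan is to push everything through order complexes and to deduce the two assertions from standard facts about shellable simplicial complexes. Recall that, since $P$ is finite and shellable, $X(P)$ is a shellable (pure) simplicial complex, and that $X(B(P))=\mathrm{sd}(X(P))$, the (first) barycentric subdivision. I will use: \emph{(a)} the link of a face of a shellable complex is shellable; \emph{(b)} the cone over a shellable complex, equivalently its join with a point, is shellable; and \emph{(c)} the barycentric subdivision of a shellable complex is shellable.

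For the first assertion, the idea is that the order complex of \emph{every} type of interval of $P$ is an iterated cone over the order complex of a two-sided open interval, and the latter is a link in $X(P)$. Indeed, a chain contained in $[a,b]$ is an arbitrary chain of $(a,b)$ together with an arbitrary subset of $\{a,b\}$, so $X([a,b])=\{a\}*X((a,b))*\{b\}$; likewise $X([a,b))=\{a\}*X((a,b))$, $X((a,b])=X((a,b))*\{b\}$, $X((-\infty,b])=X((-\infty,b))*\{b\}$, $X([a,\infty))=\{a\}*X((a,\infty))$, and $X((-\infty,\infty))=X(P)$ is shellable by hypothesis. So by \emph{(b)} it suffices to shell $X((a,b))$, $X((-\infty,b))$, and $X((a,\infty))$. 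For $X((a,b))$ with $a\prec b$ (the case $a=b$ being trivial), pick a maximal chain $\mf c_1$ of $P$ from a minimal element of $P$ up to $a$ and a maximal chain $\mf c_2$ of $P$ from $b$ up to a maximal element of $P$; then $\si:=\mf c_1\cup\mf c_2$ is a chain of $P$, every gap of $\si$ except the one strictly between $a$ and $b$ is a covering relation or lies below a minimal element or above a maximal element and is therefore empty, so $\mathrm{lk}_{X(P)}(\si)=X((a,b))$, which is shellable by \emph{(a)}. One shells $X((-\infty,b))$ and $X((a,\infty))$ in the same manner, taking $\si$ to be a maximal chain of $P$ from $b$ up to a maximal element, respectively from a minimal element up to $a$. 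This part is just bookkeeping once these identifications are in hand.

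For the second assertion, by $X(B(P))=\mathrm{sd}(X(P))$ it is enough to establish \emph{(c)}, and I would prove \emph{(c)} by induction on the number of facets. If $F_1,\dots,F_N$ is a shelling of a complex $\Delta$ and $\Delta'=\langle F_1,\dots,F_{N-1}\rangle$, then $\mathrm{sd}(\Delta')$ is shellable by induction, $\mathrm{sd}(\Delta)=\mathrm{sd}(\Delta')\cup\mathrm{sd}(F_N)$, and $\mathrm{sd}(\Delta')\cap\mathrm{sd}(F_N)=\mathrm{sd}(\Delta'\cap F_N)$, where by the shelling property $\Delta'\cap F_N$ is a nonempty union of facets of $\del F_N$, hence a shellable ball (or the sphere $\del F_N$). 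The goal is then to append to the given shelling of $\mathrm{sd}(\Delta')$ a shelling of $\mathrm{sd}(F_N)$ whose facets lying inside $\mathrm{sd}(\Delta'\cap F_N)$ form exactly an initial segment; the concatenation of the two orderings is then a shelling of $\mathrm{sd}(\Delta)$, essentially because $\mathrm{sd}(F_N)$ is glued onto the already shelled $\mathrm{sd}(\Delta')$ along the shellable codimension-zero boundary subcomplex $\mathrm{sd}(\Delta'\cap F_N)$, so each later facet meets the union of the preceding ones in the same pure codimension-one subcomplex of its boundary as it does within $\mathrm{sd}(F_N)$. To obtain such a shelling of $\mathrm{sd}(F_N)$ I would fix a linear order on the vertices of $F_N$ compatible with a shelling of $\del F_N$ whose initial facets are precisely those of $\Delta'\cap F_N$, and take the induced lexicographic order on the maximal flags of the face poset of $F_N$ (equivalently, on the linear extensions of $F_N$), which index the facets of $\mathrm{sd}(F_N)$.

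The step I expect to be the main obstacle is this last one in the proof of \emph{(c)}: constructing a shelling of the subdivided simplex $\mathrm{sd}(F_N)$ adapted to the prescribed shellable boundary subcomplex $\mathrm{sd}(\Delta'\cap F_N)$, and verifying the shelling axiom for it both internally and, after concatenation, across the seam with $\mathrm{sd}(\Delta')$. The first assertion, by comparison, is formal: it uses only that links and cones preserve shellability, applied after recognizing each interval's order complex as an iterated cone over a link of $X(P)$.
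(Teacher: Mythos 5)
The paper gives no proof of this statement; it is quoted from \cite{shellAB}, so I am evaluating your argument on its own terms.

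Your proof of the first assertion is correct and is essentially the classical one. Writing the order complex of each kind of interval as an iterated join of one or two vertices with the order complex of an open interval, and then realizing the open interval's order complex as a link in $X(P)$ of a chain $\sigma$ that is saturated below $a$ and above $b$ (so that every gap of $\sigma$ other than the one strictly between $a$ and $b$ is forced empty), is exactly the right idea. The two facts you invoke --- that links of faces of a pure shellable complex are shellable, and that joins with a vertex preserve shellability --- are standard, and $X(P)$ is pure because $P$ is graded. This half of the proposal is fine.

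Your treatment of the second assertion, however, has a genuine gap at precisely the step you flag, and two of the things you assert about it are actually false. First, ``a shelling of $\mathrm{sd}(F_N)$ whose facets lying inside $\mathrm{sd}(\Delta'\cap F_N)$ form exactly an initial segment'' is vacuous: $\mathrm{sd}(\Delta'\cap F_N)$ has strictly smaller dimension than $\mathrm{sd}(F_N)$, so no facet of $\mathrm{sd}(F_N)$ lies inside it. Second, ``each later facet meets the union of the preceding ones in the same pure codimension-one subcomplex of its boundary as it does within $\mathrm{sd}(F_N)$'' is not true: a facet $G$ of $\mathrm{sd}(F_N)$ corresponding to a flag $\sigma_1\subsetneq\cdots\subsetneq\sigma_d=F_N$ additionally meets $\mathrm{sd}(\Delta')$, in the single face of $G$ spanned by those $\sigma_i$ belonging to $\Delta'$, and that face usually has codimension greater than one. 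The verification that is actually required is that this extra face is either itself of codimension one, or is contained in one of the codimension-one faces that $G$ shares with an earlier flag; this goes through for the lex order on flags, but only after a descent computation, and only with the \emph{opposite} vertex ordering from the one your phrase ``compatible with a shelling of $\partial F_N$ whose initial facets are those of $\Delta'\cap F_N$'' most naturally suggests. The vertices of $F_N$ opposite the facets of $\Delta'\cap F_N$ must be placed \emph{last}: if they are placed first, the lex-minimal flag is the identity permutation, which has no descents, and its intersection with $\mathrm{sd}(\Delta')$ fails to be a nonempty union of codimension-one faces whenever $\Delta'\cap F_N\neq\partial F_N$, so the seam condition already breaks at the first new facet. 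In short, \emph{(c)} is itself a nontrivial theorem of Bj\"orner; your plan for proving it is reasonable in outline, but the crucial bookkeeping is absent and two of the stated justifications are incorrect. Since the paper simply cites the result, you could do the same; a self-contained proof of \emph{(c)} would need the adapted flag ordering and the descent argument spelled out.
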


A graded poset is said to be edge-lexicographically shellable or \emph{EL-shellable}\phantomsection\label{el} if there is a map $f$ from the set of all covering relations  to a totally ordered set, such that for any closed interval $[x_1,x_n]$ of length $n$, if we associate the $(n-1)$-tuple \emph{labeling} $(f([x_1,x_2]),\ldots,f([x_{n-1},x_n]))$ to a maximal chain $\{x_1\leftarrow x_2\cdots \leftarrow x_{n-1}\leftarrow x_n\}$, then there is a unique maximal chain for which the $(n-1)$-tuple labeling is \emph{increasing}, and under the lexicographic ordering, the corresponding $(n-1)$-tuple labeling is the \emph{smallest} one among the labelings coming from maximal chains between $x_1$ and $x_n$.

\begin{thm}\cite{shellAB} \label{thm:el}
If a finite poset $P$ is EL-shellable, then every closed interval of $P$ is shellable.
\end{thm}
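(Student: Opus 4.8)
The plan is to prove each closed interval shellable directly, using as the shelling order a linear extension of the lexicographic order on the maximal chains of $[x,y]$: a maximal chain $\{z_0\leftarrow z_1\leftarrow\cdots\leftarrow z_n\}$ with $z_0=x$, $z_n=y$ is ordered by its label tuple $(f([z_0,z_1]),\dots,f([z_{n-1},z_n]))$, and maximal chains with equal label tuples are ordered arbitrarily. Two structural facts are used throughout. First, every closed subinterval $[u,v]\sbseq[x,y]$ is itself a closed interval of $P$, so the defining EL-property is available inside each such $[u,v]$: there is a unique maximal chain of $[u,v]$ whose label tuple is strictly increasing, and it is lex-minimal among all maximal chains of $[u,v]$. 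Second, an EL-shellable poset is graded by definition, so $[x,y]$ and all its subintervals are graded; in particular, for fixed endpoints $u\preceq v$ all maximal chains of $[u,v]$ have the same number of covers, hence the label tuple of such a chain occupies a fixed block of coordinates inside the label tuple of any maximal chain of $[x,y]$ passing through $u$ and $v$.

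The engine of the proof is a local-modification step. Fix a maximal chain $\mf m=\{z_0\leftarrow\cdots\leftarrow z_n\}$ and an index $t$, $1\le t\le n-1$, with $f([z_{t-1},z_t])\ge f([z_t,z_{t+1}])$; equivalently, $z_{t-1}\leftarrow z_t\leftarrow z_{t+1}$ is not the (strictly) increasing maximal chain of the length-$3$ interval $[z_{t-1},z_{t+1}]$. Let $z_t'\ne z_t$ be the middle element of that increasing maximal chain, and let $\mf m''$ be obtained from $\mf m$ by replacing $z_t$ with $z_t'$. Then $\mf m''$ is a maximal chain of $[x,y]$ with $\mf m''\cap\mf m=\mf m\sm\{z_t\}$, and its label tuple differs from that of $\mf m$ only in the two coordinates for $[z_{t-1},z_t']$ and $[z_t',z_{t+1}]$. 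Since the increasing chain of $[z_{t-1},z_{t+1}]$ is lex-minimal there while $z_{t-1}\leftarrow z_t\leftarrow z_{t+1}$ is not increasing, the pair $(f([z_{t-1},z_t']),f([z_t',z_{t+1}]))$ is strictly lex-smaller than $(f([z_{t-1},z_t]),f([z_t,z_{t+1}]))$, so $\mf m''$ precedes $\mf m$ in the shelling order.

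Now take any maximal chain $\mf m'\ne\mf m$ preceding $\mf m$; I would locate an index $t$ as in the previous paragraph with the extra property $z_t\notin\mf m'$. Then $\mf m'\cap\mf m\sbseq\mf m\sm\{z_t\}=\mf m''\cap\mf m$, which is precisely the condition defining \hyperref[shellable]{shellable} (with $\mf m,\mf m',\mf m'',z_t$ in the roles of $\mf m_j,\mf m_i,\mf m_k,x$). Let $r$ be the first index where the two chains diverge, so $z_0,\dots,z_{r-1}$ are the first $r$ elements of $\mf m'$ but $z_r\notin\mf m'$, and let $p$ be the least index $>r$ with $z_p\in\mf m'$ (it exists because $z_n=y\in\mf m'$); then $z_r,\dots,z_{p-1}\notin\mf m'$, so any valid $t\in\{r,\dots,p-1\}$ automatically satisfies $z_t\notin\mf m'$. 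If no $t$ in this range satisfied $f([z_{t-1},z_t])\ge f([z_t,z_{t+1}])$, then $f([z_{r-1},z_r])<f([z_r,z_{r+1}])<\cdots<f([z_{p-1},z_p])$, so $\mf m|_{[z_{r-1},z_p]}$ has a strictly increasing label tuple and is therefore the unique increasing, hence lex-minimal, maximal chain of $[z_{r-1},z_p]$. The label tuples of $\mf m$ and $\mf m'$ agree on the coordinates preceding the block for $[z_{r-1},z_p]$, so lex-minimality of $\mf m|_{[z_{r-1},z_p]}$ forces the $\mf m$-tuple to be lex-smaller-or-equal to the $\mf m'$-tuple; since $\mf m'$ precedes $\mf m$, the two tuples are in fact equal, whence $\mf m'|_{[z_{r-1},z_p]}$ also has a strictly increasing label tuple and so equals the unique increasing maximal chain of $[z_{r-1},z_p]$, namely $\mf m|_{[z_{r-1},z_p]}$; but $z_r$ lies in $\mf m|_{[z_{r-1},z_p]}$ and not in $\mf m'$, a contradiction. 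Hence a valid $t$ exists and the verification is finished.

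The step I expect to be the main obstacle is this last one — showing $\mf m$ cannot be strictly increasing all the way from the divergence vertex $z_{r-1}$ to the next common vertex $z_p$, together with the block-by-block comparison of label tuples (which is slightly delicate precisely because distinct maximal chains may share a label tuple, so the shelling order is only a linear extension of the lexicographic order). The two-cover modification step and the identification of the codimension-one face $\mf m\sm\{z_t\}$ are routine by comparison. Note that the argument handles each closed interval on its own and makes no use of Theorem~\ref{thm:shellbasic}.
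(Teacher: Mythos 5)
The paper states this result as a citation to \cite{shellAB} and does not give a proof of its own, so there is no in-paper argument to compare against; you have supplied a correct from-scratch proof, and it is in fact the standard Bj\"orner argument. Your proof is sound: you take the lexicographic order on label tuples of maximal chains of $[x,y]$, extend it to a total order by breaking ties arbitrarily, and verify the shelling condition by the descent-swap trick. The local modification is carried out correctly --- since $z_{t-1}\leftarrow z_t\leftarrow z_{t+1}$ has a weak descent and the unique increasing chain of $[z_{t-1},z_{t+1}]$ is lex-minimal there, the middle vertex $z_t'$ of that increasing chain must differ from $z_t$, and the resulting pair of labels is strictly lex-smaller, so $\mathfrak{m}''$ genuinely precedes $\mathfrak{m}$. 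The part you flag as the main obstacle is handled correctly and is indeed where a careless argument would break: because label tuples can repeat, the shelling order is only a linear extension of a preorder, so one cannot read off $\mathfrak{m}'<\mathfrak{m}$ as a strict lexicographic inequality; your block-by-block comparison (using that $\mathfrak{m}$ and $\mathfrak{m}'$ share the common prefix through $z_{r-1}$ and both pass through $z_p$ at the same rank, so lex-minimality of the increasing restriction $\mathfrak{m}|_{[z_{r-1},z_p]}$ forces the restricted label tuples to coincide, contradicting $z_r\notin\mathfrak{m}'$ via uniqueness of the increasing chain) navigates this correctly. One small remark on framing: several standard expositions prove that $P$ itself is shellable and then invoke the paper's Theorem~\ref{thm:shellbasic} to pass to intervals; your choice to shell each closed interval directly is equally valid and is actually cleaner here, since the EL-condition is by definition a condition on every closed interval and the paper only ever uses the theorem interval-by-interval.
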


The following theorem explores the geometric properties of shellable posets and suggests that shellablility is a strong condition.

\begin{thm} \cite{shellGDVK} \label{thm:ordercomplex}
The order complex of a finite, shellable   and thin poset of length $n+1$ is PL-homeomorphic to the $n$-dimensional sphere. The order complex of   a finite, shellable and subthin poset of length $n+1$ is PL-homeomorphic to the $n$-dimensional ball,   and the boundary of the ball corresponds to those submaximal chains   which are covered by exactly one maximal chain.
\end{thm}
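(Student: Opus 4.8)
The plan is to derive both statements from the classical piecewise-linear pasting lemmas for balls and spheres, by running the shelling one top simplex at a time. Write $X = X(P)$. Since $P$ is graded of length $n+1$, $X$ is a pure $n$-dimensional simplicial complex whose top simplices are the maximal chains and whose codimension-one faces --- call them ridges --- are the submaximal chains, a ridge lying in exactly as many top simplices as the corresponding submaximal chain is covered by maximal chains. Thus, if $P$ is thin then every ridge of $X$ lies in exactly two top simplices, so $X$ is a closed simplicial pseudomanifold; if $P$ is subthin then every ridge lies in one or two top simplices and at least one lies in exactly one, so $X$ is a simplicial pseudomanifold with non-empty boundary. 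Fix a shelling order $\mathfrak{m}_1 < \mathfrak{m}_2 < \cdots < \mathfrak{m}_N$ of the top simplices as in the definition of shellability above, and for $1 \le j \le N$ let $\Delta_j \subseteq X$ be the subcomplex of all faces of $\mathfrak{m}_1, \dots, \mathfrak{m}_j$; it is pure $n$-dimensional and $\Delta_N = X$.

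The first step is to rephrase the shelling condition as: for each $j \ge 2$ the subcomplex $\Delta_{j-1} \cap \mathfrak{m}_j$ equals the non-empty union $\bigcup_{x \in \mathcal{R}(j)} (\mathfrak{m}_j \setminus \{x\})$ of facets of the $n$-simplex $\mathfrak{m}_j$, where $\mathcal{R}(j) = \{\, x \in \mathfrak{m}_j \mid \mathfrak{m}_k \cap \mathfrak{m}_j = \mathfrak{m}_j \setminus \{x\} \text{ for some } k < j \,\}$; this is immediate, since every $\mathfrak{m}_i \cap \mathfrak{m}_j$ with $i < j$ lies in some $\mathfrak{m}_j \setminus \{x\}$ with $x \in \mathcal{R}(j)$, while conversely each $\mathfrak{m}_j \setminus \{x\}$ with $x \in \mathcal{R}(j)$ already lies in $\Delta_{j-1}$. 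I would then prove by induction on $j$ that $\Delta_j$ is a PL $n$-ball, unless $\Delta_j$ is a PL $n$-sphere, in which case necessarily $j = N$. The base case $\Delta_1 = \mathfrak{m}_1$ is an $n$-ball. In the inductive step, $\Delta_{j-1}$ is a PL $n$-ball (were it a sphere we would already have $j - 1 = N$), and I split on the size of $A := \Delta_{j-1} \cap \mathfrak{m}_j$. If $|\mathcal{R}(j)| \le n$, then $A$ is a proper non-empty union of facets of $\partial \mathfrak{m}_j$, hence a PL $(n-1)$-ball sitting as a subcomplex of the PL $(n-1)$-sphere $\partial \Delta_{j-1}$; the pasting lemma for balls gives that $\Delta_j = \Delta_{j-1} \cup_A \mathfrak{m}_j$ is again a PL $n$-ball. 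If $|\mathcal{R}(j)| = n+1$, then $A = \partial \mathfrak{m}_j$ is an $(n-1)$-sphere occurring as a full-dimensional subcomplex of the $(n-1)$-sphere $\partial \Delta_{j-1}$, hence $A = \partial \Delta_{j-1}$, so $\Delta_j$ is the union of two PL $n$-balls along their entire boundary and is a PL $n$-sphere. In this last case $\Delta_j$ is a closed $n$-manifold, so each of its ridges lies in exactly two of its top simplices; were there a further top simplex $\mathfrak{m}_{j+1}$ it would meet $\Delta_j$ in a non-empty union of its own facets, exhibiting a ridge of $X$ lying in at least three top simplices and contradicting thinness (resp.\ subthinness). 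Hence $j = N$, which completes the induction.

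The conclusion then follows by inspection. If $P$ is thin, $X = \Delta_N$ has no boundary ridge, so the ball alternative is impossible at step $N$ (a PL $n$-ball has non-empty boundary for $n \ge 1$); therefore $X = \Delta_N$ is a PL $n$-sphere. If $P$ is subthin, the sphere alternative never occurs: at step $N$ it would make $X$ a closed $n$-manifold with every ridge in exactly two top simplices, i.e.\ $P$ thin, contrary to hypothesis; and before step $N$ it is excluded as above. Hence $X = \Delta_N$ is a PL $n$-ball. To identify $\partial X$, I would use that in any simplicial PL $n$-ball a point in the relative interior of a ridge $\rho$ is a boundary point exactly when $\rho$ lies in a single top simplex (its link is one vertex) and an interior point exactly when $\rho$ lies in two (its link is $S^0$); since subthinness leaves only these two possibilities, $\partial X$ is precisely the union of the ridges lying in a single top simplex, that is, of the submaximal chains covered by exactly one maximal chain.

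The only genuinely non-trivial ingredient is the PL pasting lemma invoked twice, in the forms ``attaching an $n$-simplex to a PL $n$-ball along a proper non-empty union of the simplex's boundary facets yields a PL $n$-ball'' and ``gluing two PL $n$-balls along their common boundary $(n-1)$-sphere yields a PL $n$-sphere'' \cite{shellGDVK}; I would cite these classical facts rather than reprove them, after which the rest is the bookkeeping above. The points needing a little care when writing are that a proper non-empty union of facets of $\partial \mathfrak{m}_j$ really is a PL $(n-1)$-ball --- which one sees by shelling those facets in any order, or by an explicit collapse --- and that the thinness/subthinness hypothesis is exactly what one uses, both in the inductive step and at the end, to pinpoint when $\Delta_j$ closes up and to recognise its boundary; the low-dimensional cases $n \le 1$ should be checked separately but are immediate.
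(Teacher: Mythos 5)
The paper states this theorem as a citation to \cite{shellGDVK} and does not reprove it, so there is no internal proof to compare against; your argument is the standard Danaraj--Klee shelling induction and is essentially correct.

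One step that should be spelled out explicitly is the assertion that $A = \Delta_{j-1}\cap\mathfrak{m}_j$ lies inside $\partial\Delta_{j-1}$, which is what makes the PL pasting lemma applicable; this is precisely where (sub)thinness enters the inductive step, and shellability alone does not give it. Each $(n-1)$-simplex of $A$ has the form $\mathfrak{m}_j\setminus\{x\}$ with $x\in\mathcal{R}(j)$, so it lies in $\mathfrak{m}_j$ and in at least one $\mathfrak{m}_k$ with $k<j$; since it lies in at most two top simplices of $X$ altogether, it lies in exactly one top simplex of $\Delta_{j-1}$ and is therefore a boundary ridge of $\Delta_{j-1}$. Without this bound the claim fails --- for instance, three $2$-simplices sharing a common edge are shellable, but after the first two have been attached the third meets their union in an interior edge, and the resulting complex is not a manifold. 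You do flag that (sub)thinness is ``used in the inductive step,'' but the proof would be tighter if this deduction were made explicit, as it is the exact point at which a merely shellable complex can fail to be a ball. The remaining ingredients --- the reformulation of the shelling condition via $\mathcal{R}(j)$, the two PL pasting lemmas, the observation that the sphere alternative forces $j=N$, the full-dimensionality argument identifying $A=\partial\Delta_{j-1}$ when $|\mathcal{R}(j)|=n+1$, and the identification of $\partial X$ with the ridges in a single top simplex --- are a faithful account of the classical proof, with the low-dimensional cases correctly deferred.
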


In \hyperref[sec:griddiagrams]{Section \ref*{sec:griddiagrams}}, we will encounter posets with the following properties. A \emph{grading assignment} is a map $g$ from the elements of the poset to $\mathbb{Z}$, such that whenever $a\leftarrow b$, $g(b)=g(a)+1$. Having a grading assignment is weaker than being graded, but is stronger than each closed interval being graded. A poset, where every closed interval is graded, is said to be \emph{locally thin} if every closed interval of length $3$ has exactly two maximal chains. This is equivalent to saying that every interval of the form $(y,x)$ is thin. A \emph{GT poset}\phantomsection\label{gtposet} is a locally thin poset equipped with a grading assignment, such that there are only finitely many elements in each grading.

\begin{defn}\label{defn:chaingt} Given a GT poset $P$, we can associate to it a chain complex $C(P)$ over $\F_2$, defined as follows. The $i\ith$ chain group $C_i$ is the $\F_2$-module freely generated by the elements of $P$ with grading $i$. The boundary map $\del_i:C_i\rightarrow C_{i-1}$ is defined as $\del x=\sum_{y\leftarrow x}y$. 
\end{defn}

\section{Grid diagrams}\label{sec:griddiagrams}

In this section we will introduce grid diagrams and associate certain posets to them. A grid diagram is a picture on the standard torus, although for convenience, we often think of it as a diagram on a square in the plane.  Much of the material in this section comes from \cite{CMPOSS,CMPOZSzDT}. The interested reader should consult \cite{CMPOZSzDT} for a more complete description of grid diagrams.

A \emph{grid diagram of index $n$} is a picture on the standard torus $T$. There are $n$ $\alpha$ circles, which are pairwise disjoint and parallel to the meridian, such that they cut up the torus into $n$ horizontal annuli, and there are $n$ $\beta$ circles, which are pairwise disjoint and parallel to the longitude, such that they cut up the torus into $n$ vertical annuli. Furthermore, each $\alpha$ circle intersects with each $\beta$ circle exactly once, so clearly $T\setminus (\alpha\cup\beta)$ has $n^2$ components. There are $2n$ markings on $T\setminus (\alpha\cup\beta)$, numbered $X_1,\ldots,X_n, O_1,\ldots,O_n$, such that each horizontal annulus contains $X_i$ and $O_i$ for some $i$, and each vertical annulus contains $O_i$ and $X_{i+1}$, for some $i$, with the numbering being done modulo $n$. \hyperref[fig:trefoil]{Figure \ref*{fig:trefoil}} shows a grid diagram of index $5$.

\begin{figure} 
\psfrag{x1}{$X_1$}
\psfrag{x2}{$X_2$}
\psfrag{x3}{$X_3$}
\psfrag{x4}{$X_4$}
\psfrag{x5}{$X_5$}
\psfrag{o1}{$O_1$}
\psfrag{o2}{$O_2$}
\psfrag{o3}{$O_3$}
\psfrag{o4}{$O_4$}
\psfrag{o5}{$O_5$}
\begin{center}
\includegraphics[width=170pt]{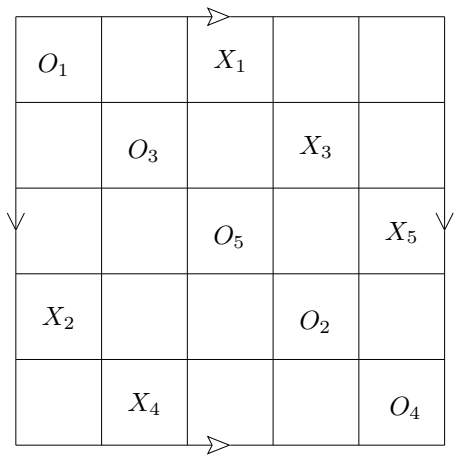}
\end{center}
\caption{A grid diagram for the trefoil.}\label{fig:trefoil}
\end{figure}

Given a grid diagram, we can construct a knot inside $\mb{R}^3$ as follows. If $T$ is embedded in $\mathbb{R}^3$ in the standard way, with the meridian bounding a disk inside the torus, and the longitude bounding a disk outside, then the knot is obtained by joining $X_i$ to $O_i$ in the horizontal annuli inside the torus $T$, and by joining $O_i$ to $X_{i+1}$ in the vertical annuli outside the torus $T$.
For example, the grid diagram of \hyperref[fig:trefoil]{Figure \ref*{fig:trefoil}} represents the trefoil. In the other direction, given a knot $K\subset \mathbb{R}^3$, it is not difficult to get a grid diagram for $K$.

\begin{lem}\cite{gridPC} For every knot $K\subset\mathbb{R}^3$, there is   a grid diagram that represents $K$.
\end{lem}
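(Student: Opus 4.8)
The plan is to give an explicit algorithm that produces a grid diagram from a given knot $K \subset \mathbb{R}^3$, then verify that the diagram so obtained actually represents $K$. First I would isotope $K$ so that it becomes a \emph{rectilinear} or \emph{staircase} knot: using a small generic perturbation, arrange that $K$ is a closed polygonal curve all of whose edges are parallel to the coordinate axes, that no two edges of the same direction are collinear or share a coordinate, and that the vertical edges and horizontal edges alternate around $K$ (one can always introduce extra right-angle turns to enforce alternation). This is a standard general-position argument: the space of such rectilinear embeddings is dense, and a small isotopy changes neither the knot type nor the fact that the diagram is a link diagram.

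Next I would reduce further so that $K$ lies in a single plane away from the crossings, i.e.\ pass to the square-bridge or arc-presentation picture. Concretely, project $K$ to the $xy$-plane so that all the vertical edges of $K$ become the ``over'' strands that pop above the plane (with all over-strands at distinct heights) and the horizontal edges stay in the plane; after another isotopy I can assume there are exactly $n$ horizontal edges and $n$ vertical edges for some $n$, that the $n$ horizontal edges have distinct $y$-coordinates $1, \dots, n$ and span consecutive-looking intervals in $x$, and similarly for the vertical edges. Relabelling, place the $i\ith$ horizontal edge in the $i\ith$ horizontal annulus of an $n \times n$ grid on the torus $T$, mark its two endpoints by $X_i$ and $O_i$, and place the $i\ith$ vertical edge in the corresponding vertical annulus with its endpoints marked $O_i$ and $X_{i+1}$ (indices mod $n$). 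By construction each horizontal annulus contains exactly one $X$ and one $O$ marking, each vertical annulus contains exactly one $O$ and one $X$, each $\alpha$ circle meets each $\beta$ circle once, so this data is a grid diagram of index $n$ in the sense of the definition above.

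Finally I would check that the knot built back from this grid diagram — joining $X_i$ to $O_i$ inside $T$ in the horizontal annuli and $O_i$ to $X_{i+1}$ outside $T$ in the vertical annuli — is isotopic to the original $K$. The horizontal arcs of the reconstructed knot are isotopic rel endpoints to the horizontal edges of the rectilinear $K$; the vertical arcs, pushed to the outside of the standard torus, are isotopic to the vertical edges pushed above the plane, with the nesting/heights matching because ``above the projection plane'' and ``outside the standard torus'' induce the same over/under data once we track the cyclic ordering of the markings. Concatenating these isotopies over the $2n$ annuli and checking they agree on the shared markings gives an ambient isotopy of $\mathbb{R}^3$ (equivalently $S^3$) carrying the grid knot to $K$.

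The main obstacle is the bookkeeping in the last step: making precise that ``vertical edges pushed outside the torus'' faithfully reproduces the original crossing information, i.e.\ that the cyclic order of the $2n$ markings around the grid encodes exactly the over/under pattern of the rectilinear diagram. This is where one must be careful rather than merely wave hands, though it is not deep; alternatively one can sidestep it entirely by quoting that arc-presentations (equivalently square-bridge positions) exist for every knot — which is exactly the content of the works cited as \cite{gridHB, gridHCL} — and then observing that an arc-presentation \emph{is} a grid diagram after the cosmetic relabelling of the previous paragraph.
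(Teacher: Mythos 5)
The paper does not prove this lemma; it simply cites \cite{gridPC}, and your sketch is essentially the standard argument found in that source and in the arc-presentation literature (rectilinearize the knot, pass to an arc presentation / square-bridge position, then read off the grid markings). Your outline is correct, and your closing observation — that the bookkeeping in the last step can be bypassed entirely by appealing to the existence of arc presentations, which are grid diagrams after relabelling — is exactly the right way to make the argument rigorous without reinventing Cromwell's work.
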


Given a grid diagram of index $n$ representing a knot $K$, we will define a GT poset $\mathcal{G}$, such that the homology of the associated chain complex is isomorphic to minus version of knot Floer homology over $\F_2$.

A \emph{generator} is a formal sums $\wt{x}=x_1+x_2+\cdots+x_n$ of $n$ points, such that each $\alpha$ circle contains one point and each $\beta$ circle contains one point. The set of all the $n!$ generators is denoted by $\wt{\mc{G}}$. The set $\mathcal{G}$ consists of elements of the form $x=\wt{x} \prod_{i=1}^n U_i^{k_i}$ where $\wt{x}\in\wt{\mathcal{G}}$ and $k_i\in\mathbb{N}\cup\{0\}$.  
We need the following few definitions to understand the partial order on $\mc{G}$.

A \emph{domain} $D$ connecting a generator $\wt{x}$ to another generator $\wt{y}$, is a $2$-chain in $T\setminus(\alpha\cup\beta)$ (i.e. a linear combination of the component of $T\sm(\al\cup\be)$) such that $\partial(\partial D|_{\alpha})=\wt{y}-\wt{x}$. Every domain $D$ can be associated to an integer valued index called the \emph{Maslov index} $\mu(D)$. The set of all domains connecting $\wt{x}$ to $\wt{y}$ is denoted by $\mathcal{D}(\wt{x},\wt{y})$. For a point $p\in T\setminus(\alpha\cup\beta)$ and a $2$-chain $D$, we define $n_p(D)$ to be the coefficient of $D$ at the point $p$. We define $\mathcal{D}^0(\wt{x},\wt{y})$ as a subset of $\mathcal{D}(\wt{x},\wt{y})$ consisting of all the domains $D$ with $n_p(D)=0$ whenever $p$ is one of the $2n$ $X$ or $O$ markings.  If $x=\wt{x}\prod_i U_i^{k_i}$ and $y=\wt{y} \prod_i U_i^{l_i}$ are two elements in $\mathcal{G}$, we define $\mathcal{D}(x,y)$ as the subset of $\mathcal{D}(\wt{x},\wt{y})$ consisting of all the domains $D$ with $n_{O_i}(D)=l_i-k_i$ and $n_{X_i}(D)=0$.  A $2$-chain $D$ is positive if $n_p(D)\geq 0$ for all points $p\in T\setminus(\alpha\cup\beta)$.

\begin{lem}\cite[Definition 3.4]{POZSzlinkinvariants} For any $\wt{x}\in\wt{\mc{G}}$, the set $\mathcal{D}^0(\wt{x},\wt{x})$ consists of only the   trivial domain.  Therefore, for any pair $x,y\in\mc{G}^{-}$, the set $\mathcal{D}(x,y)$ has at most one element.
\end{lem}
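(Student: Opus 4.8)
The plan is to reduce both claims to a description of the \emph{periodic domains}, that is, the $2$-chains lying in $\mc{D}(\wt x,\wt x)$; this set is in fact independent of $\wt x$, since its defining equation $\partial(\partial D|_\al)=\wt x-\wt x=0$ does not involve $\wt x$. Write $D=\sum_{i,j}c_{ij}R_{ij}$, where the $R_{ij}$ range over the $n^2$ square regions, with $i$ indexing the horizontal annuli and $j$ the vertical ones. Then $\partial D|_\al$ is a $1$-cycle supported on the disjoint circles $\al_1,\dots,\al_n$, hence equals $\sum_i a_i\al_i$ for some $a_i\in\Z$, and likewise $\partial D|_\be=\sum_j b_j\be_j$ (note $\partial D|_\be$ is also a cycle, as $\partial\partial D=0$). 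Rephrasing this for the coefficient function: crossing a fixed circle $\al_i$ changes $c$ by a fixed amount, independent of where one crosses, and since $T$ is closed, crossing all $n$ of the circles $\al_i$ in succession returns one to the starting region, so those $n$ increments sum to zero; the analogous statement holds for the $\be_j$. Consequently $c_{ij}=c_0+F(i)+G(j)$ for some $c_0\in\Z$ and some well-defined functions $F,G\colon\Z/n\to\Z$; equivalently, the periodic domains are precisely the $\Z$-combinations of the $n$ horizontal annuli, the $n$ vertical annuli, and the fundamental class $[T]$.

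I would then bring in the combinatorics of the markings. After relabelling, horizontal annulus $i$ contains $X_i$ and $O_i$, while the vertical annulus containing $O_i$ also contains $X_{i+1}$; hence there is a bijection $\pi$ from $\Z/n$ to the set of vertical annuli with $O_i$ lying in region $(i,\pi(i))$ and $X_i$ lying in region $(i,\pi(i-1))$. If $D\in\mc{D}^0(\wt x,\wt x)$, then $c_{ij}=0$ at every marking, so $c_0+F(i)+G(\pi(i))=0$ and $c_0+F(i)+G(\pi(i-1))=0$ for all $i$; subtracting gives $G(\pi(i))=G(\pi(i-1))$ for every $i\in\Z/n$, so $G\circ\pi$, and therefore $G$, is constant. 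Feeding this back shows $F$ is constant too, and then the equation at any single marking forces $c_{ij}\equiv 0$, i.e.\ $D$ is the trivial domain. This establishes the first assertion.

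For the second assertion, suppose $D_1,D_2\in\mc{D}(x,y)$ with $x=\wt x\prod_i U_i^{k_i}$ and $y=\wt y\prod_i U_i^{l_i}$. Then $\partial(\partial(D_1-D_2)|_\al)=(\wt y-\wt x)-(\wt y-\wt x)=0$, so $D_1-D_2$ is a periodic domain, and moreover $n_{O_i}(D_1-D_2)=(l_i-k_i)-(l_i-k_i)=0$ and $n_{X_i}(D_1-D_2)=0$ for all $i$, so $D_1-D_2\in\mc{D}^0(\wt x,\wt x)$; by the first part this is trivial, whence $D_1=D_2$. The only step carrying real content is the description of the periodic domains, and the point I would be most careful about is verifying that the increments $a_i$ (and the $b_j$) sum to zero, so that $F$ and $G$ genuinely descend to functions on $\Z/n$; this is exactly where the global topology of the torus, rather than that of a planar grid, is used. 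Everything after that is the short bijection-chase above.
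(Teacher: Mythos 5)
Your proof is correct, but note that the paper offers no argument of its own for this lemma: it is imported wholesale from Ozsv\'ath--Szab\'o's link-invariants paper, so there is nothing in the text to compare against. What you have written is the standard justification, and every step checks out. The characterization of the periodic $2$-chains is right: the condition $\partial(\partial D|_\al)=0$ forces the jump of the coefficient function across each $\al_i$ to be the same in every column, $\partial(\partial D|_\be)=0$ (which, as you observe, follows from $\partial\partial D=0$) does the same for the $\be_j$, and the fact that each family of increments sums to zero as one travels once around the torus is exactly what lets $F$ and $G$ descend to $\Z/n$ --- you are right to flag this as the one place the global topology of $T$ enters. From there, the fact that the $O$'s determine a permutation $\pi$ with $O_i$ in region $(i,\pi(i))$ and $X_i$ in region $(i,\pi(i-1))$ is exactly the cyclic linking of the markings built into the definition of a grid diagram, and the resulting telescoping $G(\pi(i))=G(\pi(i-1))$ together with surjectivity of $\pi$ kills $G$, then $F$, then $c_0$. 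The deduction of the second sentence from the first, by forming $D_1-D_2$ and checking that all $O$- and $X$-multiplicities cancel so that it lies in $\mc{D}^0(\wt x,\wt x)$, is immediate and also correct. (Your parenthetical description of the periodic $2$-chains as $\Z$-combinations of the $2n$ annuli and $[T]$ is harmless but slightly redundant, since $[T]$ is already the sum of the horizontal annuli; the parametrization by $c_0+F(i)+G(j)$ is the cleaner statement and is all you actually use.)
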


We choose an $\alpha$ circle and a $\beta$ circle on the grid diagram $G$ and cut open the torus $T$ along those circles to obtain a diagram in $[0,N)\times[0,N)\subset \mathbb{R}^2$. In this planar diagram, the $\alpha$ circles become the lines $y=i$ and the $\beta$ circles become the lines $x=i$ for $0\le i<N$.
Following \cite{CMPOZSzDT}, for two points $a=(a_1, a_2)$ and $b=(b_1,b_2)$ in $\mathbb{R}^2$, we define $J(a,b)=\frac{1}{2}$ if $(a_1-b_1)(a_2-b_2)>0$ and $0$ otherwise. We extend $J$ bilinearly for linear combinations of points. Let $\mathbb{O}$ and $\mb{X}$ denote the formal sums $\sum_i O_i$ and $\sum_i X_i$, respectively. For $\wt{x}\in\wt{\mathcal{G}}$, we define the \emph{Maslov   grading} $M(\wt{x})=J(\wt{x}- \mathbb{O},\wt{x}-\mathbb{O})+1$ and the \emph{Alexander grading} $A(\wt{x})=J(\wt{x}- \frac{\mathbb{X}+\mathbb{O}}{2},\mathbb{X}-\mathbb{O})-\frac{N-1}{2}$. The following is straightforward.

\begin{lem}\cite[Section 2.2]{CMPOZSzDT}
  $A(\wt{x})$ and $M(\wt{x})$ are integer valued gradings which are independent of the choice   of $\alpha$ and $\beta$ circles along which the torus is cut open. If $D\in\mc{D}(\wt{x},\wt{y})$ is a domain, then $M(\wt{x})-M(\wt{y})=\mu(D)-2\sum_i n_{O_i}(D)$ and $A(\wt{x})-A(\wt{y})=\sum_i(n_{X_i}(D)-n_{O_i}(D))$.
\end{lem}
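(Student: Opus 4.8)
The plan is to prove the two difference formulas first, and then to read off integrality and cut-independence from them together with one short direct computation each. Throughout I work in a planar fundamental domain and use the following elementary properties of $J$: it is symmetric and bilinear, $J(p,p)=0$, and $2J(p,q)\in\{0,1\}$ whenever $p$ and $q$ lie on no common horizontal or vertical line --- which holds whenever one of $p,q$ is a marking and the other a point of a generator, and whenever $p\neq q$ are of the same type. Expanding by bilinearity and cancelling common terms, $M(\wt{x})-M(\wt{y})=J\bigl(\wt{x}-\wt{y},\,\wt{x}+\wt{y}-2\mb{O}\bigr)$ and $A(\wt{x})-A(\wt{y})=J\bigl(\wt{x}-\wt{y},\,\mb{X}-\mb{O}\bigr)$; note that the half-integer shift $\tfrac12(\mb{X}+\mb{O})$ and the constant $\tfrac{N-1}{2}$ disappear from the Alexander difference.

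The one external ingredient I use is the combinatorial index formula for grid diagrams: since every region of a grid diagram is an embedded square with four right angles and so has Euler measure $0$, Lipshitz's formula specializes to $\mu(D)=n_{\wt{x}}(D)+n_{\wt{y}}(D)$ for any $D\in\mc{D}(\wt{x},\wt{y})$, where $n_{\wt{x}}(D)=\sum_i n_{x_i}(D)$ and $n_{x_i}(D)$ is the average of the four local multiplicities of $D$ around $x_i$ (see \cite{CMPOZSzDT}). Both sides of each of the two identities to be proved are additive under concatenation of domains (using additivity of the Maslov index), and both are unchanged when $D$ is replaced by another element of $\mc{D}(\wt{x},\wt{y})$: two such domains differ by a periodic domain, i.e.\ a $\Z$-combination of the $n$ horizontal annuli and $n$ vertical annuli, and adding a horizontal annulus, a vertical annulus, or the whole torus changes $\mu$ by $2$, $2$, $2n$ and changes each of $\sum_i n_{O_i}$ and $\sum_i n_{X_i}$ by $1$, $1$, $n$. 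Since any two generators are joined by a finite sequence of rectangles, it suffices to check both identities when $D$ is a single embedded rectangle $r$.

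So suppose $r$ goes from $\wt{x}$ to $\wt{y}$, and (legitimately, once cut-independence is known --- established independently below) choose the cut so that $r$ does not wrap around. Write $\wt{x}=\wt{c}+a+c$ and $\wt{y}=\wt{c}+b+d$, where $a,c$ are the corners of $r$ on its NE--SW diagonal and $b,d$ the other two; then $J(a,c)=\tfrac12$ and $J(b,d)=0$. Since no point of $\wt{c}$ lies on $\overline{r}$ (that is part of what it means for $r$ to be a rectangle), each corner contributes $\tfrac14$ and every other point $0$, so $n_{\wt{x}}(r)=n_{\wt{y}}(r)=\tfrac12$ and $\mu(r)=1$. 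The key point is the elementary fact that, for a point $p$ lying on no horizontal or vertical line through a corner of $r$, the quantity $J(p,a)+J(p,c)-J(p,b)-J(p,d)$ --- a rectangular second difference of the function $q\mapsto J(p,q)$ over the box $\overline{r}$ --- equals $1$ if $p\in\mathrm{int}(r)$ and $0$ otherwise. Taking $p\in\wt{c}$ gives $J(\wt{c},\,a+c-b-d)=0$, and taking $p=O_k$ or $p=X_k$ (these have half-integer coordinates, so the hypothesis holds) gives $J(\mb{O},\,a+c-b-d)=\sum_k n_{O_k}(r)$ and $J(\mb{X},\,a+c-b-d)=\sum_k n_{X_k}(r)$. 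Plugging these, together with $J(a,c)-J(b,d)=\tfrac12$, into the bilinear expansions $J(\wt{x}-\wt{y},\,\wt{x}+\wt{y}-2\mb{O})=2J(\wt{c},\,a+c-b-d)+2\bigl(J(a,c)-J(b,d)\bigr)-2J(\mb{O},\,a+c-b-d)$ and $J(\wt{x}-\wt{y},\,\mb{X}-\mb{O})=J(\mb{X},\,a+c-b-d)-J(\mb{O},\,a+c-b-d)$ yields $M(\wt{x})-M(\wt{y})=1-2\sum_k n_{O_k}(r)=\mu(r)-2\sum_k n_{O_k}(r)$ and $A(\wt{x})-A(\wt{y})=\sum_k\bigl(n_{X_k}(r)-n_{O_k}(r)\bigr)$, as desired.

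It remains to handle integrality and cut-independence. For $M$, write $M(\wt{x})-1=J(\wt{x}-\mb{O},\wt{x}-\mb{O})=\sum_{i<j}c_ic_j\,(2J(p_i,p_j))$, where the $c_i\in\Z$ are the coefficients of the $0$-chain $\wt{x}-\mb{O}$; since each $2J(p_i,p_j)\in\Z$, we get $M(\wt{x})\in\Z$. For $A$, fix a reference generator $\wt{x}_0$, verify $A(\wt{x}_0)\in\Z$ by direct inspection, and use $A(\wt{x})=A(\wt{x}_0)+\sum_i\bigl(n_{X_i}(D)-n_{O_i}(D)\bigr)$ for any $D\in\mc{D}(\wt{x}_0,\wt{x})$ (such $D$ exists, the generators being joined by rectangles). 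For cut-independence, note that changing the $\al$- or $\be$-circle along which $T$ is cut open replaces the planar picture by one obtained by a cyclic shift of the horizontal or of the vertical coordinate, applied simultaneously to the generator points and to the markings; a direct check shows that $J(\wt{x}-\mb{O},\wt{x}-\mb{O})$ and $J(\wt{x}-\tfrac12(\mb{X}+\mb{O}),\,\mb{X}-\mb{O})$ are invariant under such a shift (this is carried out in \cite{CMPOZSzDT}). The main obstacle is really only the bookkeeping around the torus identifications --- which is why it is convenient to settle cut-independence first and then assume rectangles do not wrap --- together with the appeal to Lipshitz's index formula and to additivity of the Maslov index, which are the genuinely external inputs.
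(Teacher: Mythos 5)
The paper does not prove this lemma; it is stated with a citation to \cite{CMPOZSzDT} and the remark ``the following is straightforward,'' so there is no internal proof for your argument to be measured against. Judged on its own, your argument is structurally sound and is essentially the standard route: reduce the two difference formulas to the case of a single rectangle by additivity plus invariance under adding periodic domains, compute for a rectangle via bilinearity of $J$ and the rectangular second-difference identity $J(p,a)+J(p,c)-J(p,b)-J(p,d)=\mathbf{1}[p\in\mathrm{int}(r)]$, and get integrality and cut-independence from the differences plus a finite base check. The expansion $M(\wt{x})-M(\wt{y})=2J(\wt{c},a+c-b-d)+2\bigl(J(a,c)-J(b,d)\bigr)-2J(\mb{O},a+c-b-d)$ is right, as are the periodic-domain bookkeeping and the observation that the orientation convention $\partial(\partial r|_\alpha)=\wt{y}-\wt{x}$ forces $\wt{x}$'s corners onto the NE--SW diagonal, hence $J(a,c)=\tfrac12$ and $J(b,d)=0$.

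One claim is, however, simply false as stated: ``no point of $\wt{c}$ lies on $\overline{r}$ (that is part of what it means for $r$ to be a rectangle).'' That is the defining property of an \emph{empty} rectangle, not of an arbitrary rectangle, and your reduction (``any two generators are joined by a finite sequence of rectangles,'' via the fact that $S_n$ is generated by transpositions) produces rectangles that may well contain $\wt{c}$-points in their interior. You can repair this in either of two ways. (i) Keep arbitrary rectangles and track the extra term: if $m$ points of $\wt{c}$ lie in $\mathrm{int}(r)$ then $2J(\wt{c},a+c-b-d)=2m$, while also $n_{\wt{x}}(r)=n_{\wt{y}}(r)=\tfrac12+m$, so $\mu(r)=1+2m$ and the two contributions of $2m$ cancel against each other; the final identities are unaffected. (ii) Reduce genuinely to \emph{empty} rectangles by first adding enough copies of the whole torus to obtain a positive domain and then invoking the decomposition into empty rectangles (\hyperref[lem:grading]{Lemma~\ref*{lem:grading}} of the paper --- which is proved independently of this statement, so there is no circularity). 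Either repair is short, but one of them is needed. A more cosmetic point: the appeal to Lipshitz's index formula ``$\mu=e+n_{\wt{x}}+n_{\wt{y}}$'' is a Floer-theoretic import; the grid literature usually takes $\mu$ of an empty rectangle to be $1$ by definition and builds the index combinatorially, which would make the argument self-contained within the combinatorial framework the paper works in.
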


We extend the assignment of Maslov and Alexander gradings from $\wt{\mathcal{G}}$ to $\mathcal{G}$ by defining $M(\wt{x} \prod_i U_i^{k_i})=M(\wt{x})-2\sum_i k_i$ and $A(\wt{x} \prod_i U_i^{k_i})=A(\wt{x})-\sum_i k_i$. Stated differently, we assign an $(M,A)$ bigrading of $(-2,-1)$ to each $U_i$. 

If the reader is following the analogies from the Floer homology picture, it should be pretty clear by this point that the positive domains of Maslov index one are of special importance to us. \hyperref[lem:maslovone]{Lemma \ref*{lem:maslovone}} characterizes them. Note that the lemma also follows from \hyperref[lem:grading]{Lemma \ref*{lem:grading}}.

A domain $R\in\mc{D}(\wt{x},\wt{y})$ is called an \emph{empty   rectangle}\phantomsection\label{emptyrectangle} if $R$ has coefficients $0$ and $1$ everywhere, and the closure of the region where $R$ has coefficient $1$ forms a rectangle which does not contain any $\wt{x}$-coordinate or any $\wt{y}$-coordinate in its interior. It is clear that empty rectangles have Maslov index one \cite[Equation 12]{CMPOSS}. The set of empty rectangles joining $\wt{x}$ to $\wt{y}$ is denoted by $\mathcal{R}(\wt{x},\wt{y})$. Note that
$\mathcal{R}(\wt{x},\wt{y})=\varnothing$ unless
$\wt{x}$ and $\wt{y}$ differ in exactly two coordinates, and
even then $\#|\mathcal{R}(\wt{x},\wt{y})|\leq 2$.  
For $x=\wt{x}\prod_i U_i^{k_i}$ and $y=\wt{y}\prod_i
U_i^{l_i}$ in $\mathcal{G}$, we define
$\mathcal{R}(x,y)=\mathcal{R}(\wt{x},\wt{y})\cap\mathcal{D}(x,y)$.

\begin{lem}\cite{CMPOSS}\label{lem:maslovone} If $D\in\mathcal{D}(\wt{x},\wt{y})$ is a positive domain with
  $\mu(D)=1$,  then $D$ is an empty rectangle.
\end{lem}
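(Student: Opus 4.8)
The plan is to identify positive index-one domains by combining the combinatorial index formula with positivity. For a grid diagram every complementary region of $\al\cup\be$ is a square, hence has Euler measure $0$, so for any domain $D\in\mc{D}(\wt{x},\wt{y})$ the index formula collapses to $\mu(D)=n_{\wt{x}}(D)+n_{\wt{y}}(D)$, where $n_{\wt{x}}(D)=\sum_{p\in\wt{x}}\bar n_p(D)$ and $\bar n_p(D)$ is the average of the four coefficients of $D$ at the squares meeting $p$; this identity is in any case a special case of \hyperref[lem:grading]{Lemma \ref*{lem:grading}}. I assume $D\geq 0$ and $n_{\wt{x}}(D)+n_{\wt{y}}(D)=1$, so $D\neq 0$. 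First I dispose of $\wt{x}=\wt{y}$: then $D$ connects $\wt{x}$ to itself, so $M(\wt{x})-M(\wt{y})=0$, and the grading relation $M(\wt{x})-M(\wt{y})=\mu(D)-2\sum_i n_{O_i}(D)$ forces $\mu(D)$ to be even, a contradiction. So let $k\geq 1$ be the number of coordinates in which $\wt{x}$ and $\wt{y}$ differ.

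Next comes the counting step. Each $\bar n_p(D)\geq 0$. If $p\in\wt{x}\cap\wt{y}$, then $\del(\del D|_\al)=\wt{y}-\wt{x}$ has coefficient $0$ at $p$, which forces the two diagonally opposite pairs of squares at $p$ to have equal coefficient sums; hence $\bar n_p(D)$ is a half-integer and $p$ contributes $2\bar n_p(D)$ to $\mu(D)$. If $p$ is one of the $2k$ differing coordinates, the same relation has coefficient $\pm1$ at $p$, so the four squares at $p$ have total coefficient at least $1$ and $\bar n_p(D)\geq\frac14$. Were $\bar n_p(D)>0$ for some shared $p$, then $\bar n_p(D)\geq\frac12$, its contribution $2\bar n_p(D)\geq1$ would exhaust $\mu(D)=1$, and every strictly positive contribution of the $k\geq1$ differing coordinates would have to vanish --- impossible. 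Thus no shared coordinate lies in the closure of the support of $D$, so $\mu(D)=1$ is the sum of the $2k$ contributions from the differing coordinates, each at least $\frac14$; hence $k\leq2$. Also $k=1$ is impossible, since two generators agreeing in $n-1$ coordinates must agree in all $n$. Therefore $k=2$: the four differing coordinates lie on two $\al$ circles and two $\be$ circles, each contributes exactly $\frac14$, and at each of them exactly one of the four surrounding squares has coefficient $1$ and the other three have coefficient $0$.

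It remains to recognize $D$ as an empty rectangle. I first claim $\del D$ contains no complete $\al$ or $\be$ circle. A non-special $\al$ circle carries a shared coordinate, near which all four coefficients of $D$ vanish, so $D$ does not jump across it there; and for either special $\al$ circle, at one of its two differing coordinates the unit square lies on only one side, so on the other side $D$ has coefficient $0$ on both sides of the circle, and again $D$ does not jump across the circle there. The $\be$-circle case is symmetric. Since $\del D$ turns only at the four differing coordinates and contains no complete circle, it is an embedded rectangle with those four coordinates as corners. A $2$-chain on the torus is determined by its boundary up to multiples of the fundamental class, so $D$ equals the indicator of the region enclosed by this rectangle plus $c[T]$ for some $c\in\Z$; positivity together with the fact that the support of $D$ avoids the shared coordinates forces $c=0$. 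Hence $D$ has all coefficients in $\{0,1\}$, its support closes up to a rectangle, and no coordinate of $\wt{x}$ or $\wt{y}$ lies in its interior (the shared ones are avoided, the four differing ones are corners): $D$ is an empty rectangle.

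I expect the main obstacle to be this last local-to-global passage --- converting the purely local data ($k=2$, one unit square at each of four points, opposite-pair balance elsewhere, support disjoint from the shared coordinates) into the global statement that $\del D$ bounds a single rectangle --- together with the care needed on the torus to exclude periodic or annular contributions, which is exactly why the case $\wt{x}=\wt{y}$ and the ``no complete circle'' claim have to be handled separately. The orientation bookkeeping in $\del(\del D|_\al)=\wt{y}-\wt{x}$ --- used to decide which of the four squares at a differing coordinate carries the coefficient $1$ --- is routine.
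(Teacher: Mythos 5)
Your argument is correct in substance, but it takes a genuinely different route from the paper. The paper does not prove this lemma directly: it is cited from \cite{CMPOSS}, and the sentence immediately preceding it observes that it ``also follows from Lemma \ref{lem:grading}.'' That intended proof is essentially one line: by Lemma \ref{lem:grading} a positive $D$ decomposes as a sum of $k$ empty rectangles $D=\sum_{i=1}^k D_i$; each $D_i$ has $\mu(D_i)=1$, and $\mu$ is additive under concatenation (e.g.\ via $M(\wt{x})-M(\wt{y})=\mu(D)-2\sum_i n_{O_i}(D)$, whose right-hand side is manifestly additive), so $\mu(D)=k$ and hence $k=1$. You instead give a self-contained combinatorial proof from the grid-diagram index formula $\mu(D)=n_{\wt{x}}(D)+n_{\wt{y}}(D)$: parity kills $\wt{x}=\wt{y}$, the half-integer vs.\ quarter-integer accounting at shared vs.\ differing generator points forces $k=2$ with exactly one unit square at each of the four corners, and then a boundary analysis identifies $D$ as a rectangle. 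What the paper's route buys is brevity and the fact that the whole burden is shifted onto Lemma \ref{lem:grading}, which it proves anyway by an explicit peeling induction; what yours buys is independence from Lemma \ref{lem:grading} and a sharper local picture of $D$.

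Two small points worth flagging. First, you attribute the index identity $\mu(D)=n_{\wt{x}}(D)+n_{\wt{y}}(D)$ to Lemma \ref{lem:grading}; that lemma is the peeling/decomposition statement, not an index formula, so the identity should instead be credited to \cite{CMPOSS} (the paper points to Equation 12 there for $\mu$ of a rectangle). Second, the step ``since $\partial D$ turns only at the four differing coordinates and contains no complete circle, it is an embedded rectangle'' is compressed. It is true, but one should spell out that the balance relation $NE+SW=NW+SE$ at every non-generator intersection forces $\partial D$ to pass straight through there, and the one-unit-quadrant condition at each of the four corners pins the coefficient of $\partial D$ to $\{0,\pm1\}$ on each arc of the two special $\alpha$ and two special $\beta$ circles; together with the no-full-circle observation this gives exactly one arc on each special circle, and these four arcs close up into a single embedded loop. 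Likewise, once $D$ is a $\{0,1\}$-valued $2$-chain with this boundary, the same one-unit-quadrant condition (rather than just positivity and avoidance of shared coordinates) is what rules out $D$ being the complementary region of the loop, since the complement has three of the four quadrants nonzero at each corner. None of these require a new idea, only more careful bookkeeping, so I would not call them gaps so much as places where the writeup outruns the argument.
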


\begin{lem}\label{lem:grading} Let $D\in\mathcal{D}(\wt{x},\wt{y})$ be   a positive domain.  Then there exist generators   $\wt{u}_0,\wt{u}_1,\cdots,\wt{u}_k\in\wt{\mathcal{G}}$   with $\wt{u}_0=\wt{x}$ and $\wt{u}_k=\wt{y}$,   and empty rectangles $D_i\in\mathcal{R}(\wt{u}_{i-1},\wt{u}_i)$   such that $D=\sum_i D_i$.
\end{lem}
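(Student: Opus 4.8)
The plan is to argue by induction on the \emph{area} $a(D)=\sum_{p}n_p(D)$, the sum running over the $n^2$ components $p$ of $T\sm(\al\cup\be)$; since $D$ is positive this is a non-negative integer, and $a(D)=0$ iff $D=0$, in which case $\wt y-\wt x=\del(\del D|_\al)=0$ gives $\wt x=\wt y$ and we take $k=0$. For the inductive step it is enough to prove the following: \emph{if $D\in\mc D(\wt x,\wt y)$ is positive and nonzero, there exist $\wt u\in\wt{\mc G}$ and an empty rectangle $R\in\mc R(\wt x,\wt u)$ with $D-R$ positive.} Granting this, $D-R$ is a positive domain in $\mc D(\wt u,\wt y)$ --- indeed $\del(\del(D-R)|_\al)=(\wt y-\wt x)-(\wt u-\wt x)=\wt y-\wt u$ --- with $a(D-R)=a(D)-a(R)<a(D)$ since $R$ covers at least one region; applying the inductive hypothesis to $D-R$ and prepending $R$ to the resulting decomposition completes the proof.

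To prove the displayed statement I would use the \emph{local multiplicity identity}: at a point $p=\al\cap\be$, if the four regions meeting $p$ are labelled cyclically, then the sum of their $D$-multiplicities with one diagonal pair counted positively and the other negatively equals $+1$, $-1$ or $0$ according as $p\in\wt y\sm\wt x$, $p\in\wt x\sm\wt y$, or neither; this follows from $\del(\del D|_\al)=\wt y-\wt x$ together with $\del^2 D=0$. First I would produce \emph{some} rectangle from $\wt x$ lying under $D$: a domain $R'$ with coefficients $0,1$ whose support closes up to an embedded rectangle two of whose corners (a diagonal pair) lie in $\wt x$, and each of whose support regions has $D$-multiplicity $\ge 1$. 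If $\wt x=\wt y$, then $\del D|_\al$ and $\del D|_\be$ are $1$-cycles, from which a short computation shows the support of $D$ contains a full horizontal or vertical annulus of the grid, inside which a width-one rectangle from $\wt x$ is immediate. If $\wt x\ne\wt y$, I would pick $p\in\wt x\sm\wt y$; the identity forces one of a fixed diagonal pair of regions at $p$ to have positive multiplicity, say (after a $180^\circ$ rotation of $T$) the region northeast of $p$. Keeping $p$ as southwest corner, I would grow the rectangle north and east, always into regions of positive $D$-multiplicity, stopping the first time the opposite corner meets another point of $\wt x$; this happens before the rectangle could wrap all the way around $T$, since otherwise the support of $D$ would contain a full annulus and we could extract a rectangle from $\wt x$ as before. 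Finally, among all rectangles from $\wt x$ with support in $\{q: n_q(D)\ge 1\}$ --- nonempty by the above --- I would take one, $R$, of minimal area: it must be empty, since a point of $\wt x$ in its interior, together with the two grid lines through it, would cut off a sub-rectangle from $\wt x$ of strictly smaller area whose support still lies in $\{q: n_q(D)\ge 1\}$, contradicting minimality. Then $R\in\mc R(\wt x,\wt u)$ for the appropriate $\wt u$, and $D-R\ge 0$, as required.

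The step I expect to be the main obstacle is the enlargement in the middle of the second paragraph: on the torus there is no global ``lowest, leftmost'' corner to exploit, so one must check locally --- using the multiplicity identity and the fact that $D$ is an honest domain, not merely a positive $2$-chain --- that a rectangle grown out of a corner at $\wt x$ cannot keep growing forever but must lock onto a second point of $\wt x$. The remaining ingredients --- the area induction, the verification that $D-R\in\mc D(\wt u,\wt y)$, and the minimality argument for emptiness --- are routine.
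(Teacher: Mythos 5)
Your overall strategy coincides with the paper's: induct on the total multiplicity $\sum_p n_p(D)$, and in the inductive step peel off a single empty rectangle $R\in\mc R(\wt x,\wt u)$ with $D-R\ge 0$. The base case, the verification that $D-R\in\mc D(\wt u,\wt y)$, and the minimal-area argument for emptiness of $R$ are all fine and match what the paper does (the paper likewise takes a minimal rectangle with two diagonally opposite $\wt x$-corners and observes it must be empty).

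The gap is exactly in the step you flag, but you misdiagnose what can go wrong. You assert that when you grow a rectangle northeast out of a corner $p\in\wt x$, it must eventually ``lock onto a second point of $\wt x$'' at its growing corner, and the only failure mode you worry about is the rectangle wrapping around $T$. In fact the real failure mode is getting \emph{stuck} with the northeast corner at a point that is \emph{not} in $\wt x$. Concretely, take $D$ with multiplicity $1$ on the five regions $[0,1]\times[0,1]$, $[1,2]\times[0,1]$, $[2,3]\times[0,1]$, $[1,2]\times[1,2]$, $[2,3]\times[1,2]$ and $0$ elsewhere (in a large enough grid); the local multiplicity identity gives $(0,0),(1,1),(3,2)\in\wt x$ and $(3,0),(0,1),(1,2)\in\wt y$. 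Growing from $p=(0,0)$ forces the unique sequence ending at the $3\times 1$ rectangle $[0,3]\times[0,1]$, whose NE corner $(3,1)$ has local identity $0$ and so lies in neither $\wt x$ nor $\wt y$. The paper's argument handles exactly this: it takes a \emph{maximal} rectangle $R_0$ under $D$ with $p$ as a fixed corner, and then does a boundary case analysis around the opposite corner $p_0$ — either $p_0\in\wt x$, or the wrap-around case gives an $\wt x$-coordinate on the far edge, or (if $D$ is nonzero on the square diagonally across $p_0$) one walks along the top (or right) edge of $R_0$ to find the first multiplicity drop, which the identity forces to be an $\wt x$-coordinate (here $(1,1)$). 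One then restricts to the sub-rectangle of $R_0$ with that point as the new NE corner. So you would need to replace ``grow until the NE corner hits $\wt x$'' by ``take a maximal rectangle and locate an $\wt x$-coordinate somewhere on its top or right edge,'' after which your minimal-area argument goes through unchanged.
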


\begin{proof} We can assume that $D$ is not a trivial domain, and   thereby assume without loss of generality that $n_{\wt{x}_1}(D)\ne 0$.   Furthermore, since $\partial (\partial   D|_{\alpha})=\wt{y}-\wt{x}$, the coefficient of $D$ at   either the top-right square or the bottom-left square of $\wt{x}_1$ must   be non-zero. Assume, after rotating everything by $180^{\circ}$ if necessary, that the coefficient of the top-right square is non-zero.

Consider all rectangles $R$, such that $R$ is contained in $D$ as $2$-chains (i.e. the $2$-chain $D\sm R$ is positive), and $R$ has $\wt{x}_1$ as its bottom-left corner. Partially order such rectangles by inclusion. Let $R_0$ be a maximal element under such an order, and let $p_0$ be the top-right corner of $R_0$. We want to show that $R_0$ contains an $\wt{x}$-coordinate other than $\wt{x}_1$. 

Assume that $D$ has non-zero coefficient at the square to the   top-left of $p_0$. Since $R_0$ is a maximal element, either $p_0$ must lie on the $\alpha$ circle immediately below the $\alpha$ passing through $\wt{x}_1$, or $D$ must have   zero coefficient at some square above the top edge of   $R$. In the first case, $R_0$ contains the $\wt{x}$-coordinate lying on the $\beta$ circle passing through $p_0$, and so we are done. For the second case, let us start at $p_0$ and proceed left along the top edge of $R_0$ until we reach the first point $p_1$, such $D$ has non-zero   coefficient at the top-right square of $p_1$, but has zero   coefficient at the top-left square of $p_1$. Then it is easy to see   that $p_1$ must be an $\wt{x}$-coordinate, and once more, we are done. A similar analysis shows that if $D$ has non-zero   coefficient at the bottom-right square of $p_0$, then also $R_0$   contains an $\wt{x}$-coordinate other than $\wt{x}_1$.  Finally, if the   coefficient of $D$ is zero at both the top-left and the bottom-right   square of $p_0$, then $p_0$ itself is an $\wt{x}$-coordinate.

Thus $D$ contains a rectangle $R_1$, with two $\wt{x}$-coordinates, say $\wt{x}_1$ and $\wt{x}_2$, being the bottom-left corner and the top-right corner respectively.  Now consider the partial order on rectangles that we have defined earlier, but restrict only to the ones whose top-right corner is an $\wt{x}$-coordinate.  Let $R_3$ be a minimal element. Then the rectangle $R_3$ is an empty rectangle connecting $\wt{x}$ to some generator $\wt{u}_1$. The positive domain $D\sm R_3\in\mc{D}(\wt{u}_1,\wt{y})$ has a smaller sum of coefficients as $2$-chains, and hence an induction finishes the proof.
\end{proof}

The partial order on $\mathcal{G}$ is defined by declaring $y\preceq x$ if and only if there exists a positive domain in $\mathcal{D}(x,y)$.  It is clear that the elements in different Alexander gradings are not comparable. The covering relations are indexed by the elements of $\mathcal{R}(x,y)$. It is routine to prove the following.

\begin{lem}\cite[Section 2.2]{CMPOZSzDT}\label{lem:gt}
With the grading assignment being the Maslov grading,  the grid poset $\mathcal{G}$ is a \hyperref[gtposet]{GT poset}.
\end{lem}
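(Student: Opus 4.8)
The plan is to verify, in turn, the defining features of a \hyperref[gtposet]{GT poset}: that $\mc{G}$ is indeed a poset, that $M$ is a grading assignment on it, that it is locally thin, and that each Maslov grading contains only finitely many elements. The first and last of these are immediate, and the real content lies in the grading assignment (which, as a by-product, forces the covering relations to be exactly the empty rectangles) and in local thinness (which is the combinatorial input behind $\del^2=0$ for grid homology). To begin with the easy parts: reflexivity of $\preceq$ is witnessed by the trivial domain; transitivity holds because a sum of positive domains is positive, so $D\in\mc{D}(x,y)$ and $D'\in\mc{D}(y,z)$ positive give $D+D'\in\mc{D}(x,z)$ positive; and antisymmetry holds because if $D\in\mc{D}(x,y)$ and $D'\in\mc{D}(y,x)$ are positive then $D+D'$ is a positive element of $\mc{D}^0(\wt x,\wt x)$, hence trivial by \cite[Definition 3.4]{POZSzlinkinvariants}, so $D=D'=0$ and $x=y$. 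For finiteness of gradings, $M(\wt x\prod_i U_i^{k_i})=M(\wt x)-2\sum_i k_i$, so, since $\wt{\mc{G}}$ is finite, fixing the value of $M$ bounds $\sum_i k_i$ from above, leaving only finitely many choices of $\wt x$ and of $(k_i)$.

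Next I would show that $M$ is a grading assignment. The key subcomputation is that every empty rectangle $R\in\mc{R}(x',y')$ satisfies $M(x')-M(y')=1$: this follows from $\mu(R)=1$, the identity $M(\wt{x'})-M(\wt{y'})=\mu(R)-2\sum_i n_{O_i}(R)$, the formula $M(\,\cdot\,\prod_i U_i^{k_i})=M(\,\cdot\,)-2\sum_i k_i$, and the constraint $n_{O_i}(R)=l_i-k_i$ built into $\mc{D}(x',y')$. Now suppose $a\leftarrow b$. Choose a positive $D\in\mc{D}(b,a)$ and apply \hyperref[lem:grading]{Lemma \ref*{lem:grading}} to write $D=D_1+\cdots+D_k$ with each $D_j$ an empty rectangle; accumulating $U$-exponents along the way produces elements $b=u_0,u_1,\ldots,u_k=a$ of $\mc{G}$ with $D_j\in\mc{R}(u_{j-1},u_j)$, whence $M(b)-M(a)=k$. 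If $k\ge2$ then $u_1$ is a strict intermediate element of $[a,b]$ (both $D_1$ and $D_2+\cdots+D_k$ are positive and nontrivial), contradicting $a\leftarrow b$; so $k=1$, $D$ is an empty rectangle, and $M(b)=M(a)+1$. Conversely, if $R\in\mc{R}(b,a)$ and $a\preceq z\preceq b$, then positive domains $D'\in\mc{D}(b,z)$ and $D''\in\mc{D}(z,a)$ satisfy $D'+D''=R$ by the uniqueness in \cite[Definition 3.4]{POZSzlinkinvariants}, and since a positive $2$-chain $D'\le R$ with $\partial(\partial D'|_\alpha)$ of the form $\wt z-\wt w$ for generators is either $0$ or $R$ (a standard property of rectangles, cf. \cite{CMPOZSzDT}), we get $z\in\{a,b\}$. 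Thus the covering relations of $\mc{G}$ are precisely the empty rectangles.

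Finally, local thinness. Because $M$ is a grading assignment, every closed interval $[a,b]$ is graded of length $M(b)-M(a)+1$, so it is meaningful to speak of intervals of length $3$. Fix such an interval, so $M(b)-M(a)=2$, and let $D$ be the unique positive domain in $\mc{D}(b,a)$ (it exists since $a\prec b$, and is unique by \cite[Definition 3.4]{POZSzlinkinvariants}). By the previous paragraph, a maximal chain $a\leftarrow z\leftarrow b$ of $[a,b]$ is exactly the datum of an ordered decomposition $D=R_1+R_2$ into two empty rectangles with $R_1\in\mc{R}(b,z)$ and $R_2\in\mc{R}(z,a)$ (here $z$ and the $U$-exponents of $R_1,R_2$ are determined, and covers are empty rectangles); at least one such decomposition exists by \hyperref[lem:grading]{Lemma \ref*{lem:grading}}, whose output has exactly $k=M(b)-M(a)=2$ pieces. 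The remaining point is that there are \emph{exactly} two such decompositions, which is the standard combinatorial fact underlying $\del^2=0$ in grid homology \cite{CMPOSS,CMPOZSzDT}: a positive domain that can be written as a composite of two empty rectangles can be so written in precisely two ways. Briefly: when the two rectangles have disjoint interiors, the two decompositions differ only by the order of composition; when their interiors overlap, their union is an ``$L$''-shaped or staircase region admitting exactly one other rectangular splitting with an admissible intermediate generator; the $O$-markings play no role, as they only pin down the vector of $U$-exponents. Hence $[a,b]$ has exactly two maximal chains, $\mc{G}$ is locally thin, and so $\mc{G}$ is a GT poset.

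I expect the principal obstacle to be this last step: the exact count of rectangle decompositions of a Maslov-index-$2$ positive domain, which rests on the (standard, if somewhat fiddly) classification of such domains into the disjoint and overlapping cases, together with care about the thin ``annular'' configurations. Everything else is routine unwinding of the definitions in \hyperref[sec:posets]{Section \ref*{sec:posets}}.
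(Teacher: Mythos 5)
The paper itself offers no proof of this lemma: it is flagged as ``routine'' and delegated to \cite[Section 2.2]{CMPOZSzDT}. Your proposal supplies the argument that the paper leaves implicit, and it is essentially correct and takes the expected route: verify the poset axioms from positivity and from the rigidity of $\mc{D}^0(\wt{x},\wt{x})$, derive the grading assignment from \hyperref[lem:grading]{Lemma~\ref*{lem:grading}} together with the Maslov-index formula, get finiteness of each grading from finiteness of $\wt{\mc{G}}$ and the $U$-weight bound, and reduce local thinness to the ``every Maslov-index-$2$ positive domain with $n_{\mathbb X}=0$ has exactly two rectangle decompositions'' count underlying $\del^2=0$.

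Two remarks on the places you flagged or left somewhat soft. First, your closing worry about ``thin annular configurations'' can in fact be dismissed cleanly in this setting: the antisymmetry computation you already gave (a positive periodic $2$-chain with vanishing $n_{X_i}$ is forced to be $0$) shows that $a\prec b$ always implies $\wt a\ne\wt b$. Hence for a length-$3$ interval the generators $\wt a$ and $\wt b$ necessarily differ in $3$ or $4$ coordinates, and the index-$2$ domain is a pair of disjoint rectangles or an L/hexagon, each with exactly two decompositions; the full row/column annuli never appear because they have $n_{X_i}\ne 0$ for the $X$-marking they contain. Second, your argument that an empty rectangle $R\in\mc R(b,a)$ actually gives a \emph{covering} relation (the appeal to ``a positive $2$-chain $D'\le R$ with prescribed $\alpha$-boundary is $0$ or $R$'') is a bit more delicate than it needs to be; once you know $M$ strictly increases along $\prec$ (which follows from \hyperref[lem:grading]{Lemma~\ref*{lem:grading}} applied to the unique positive domain: a nontrivial positive domain decomposes into $k\ge 1$ rectangles, so $M$ jumps by $k\ge 1$), the fact that $M(b)-M(a)=1$ already leaves no room for a strictly intermediate $z$. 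With those two adjustments the proof is complete and matches what the paper intends by citing \cite{CMPOZSzDT}.
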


Following \hyperref[defn:chaingt]{Definition \ref*{defn:chaingt}}, let $C(\mc{G})$ be the chain complex associated to the GT poset $\mc{G}$. Its homology is bigraded, with the Maslov grading being the homological grading, and the Alexander grading being an extra grading.

\begin{thm} \cite{CMPOSS}The homology of $C(\mathcal{G})$ is   isomorphic, as bigraded $\F_2$-modules, to   $\mathit{HFK}^{-}(S^3,K;\F_2)$, the minus version of knot Floer   homology over $\F_2$.
\end{thm}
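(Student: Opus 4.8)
The plan is to identify $C(\mc G)$ with the grid chain complex constructed in \cite{CMPOSS} and then to quote that paper's computation of its homology; by \hyperref[lem:gt]{Lemma \ref*{lem:gt}} and \hyperref[defn:chaingt]{Definition \ref*{defn:chaingt}}, $C(\mc G)$ is already a well-defined bigraded chain complex over $\F_2$, so everything reduces to matching it, term by term and differential by differential, with the construction of \cite{CMPOSS}.

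First I would match the underlying bigraded $\F_2$-modules. The chain complex over $\F_2$ of \cite{CMPOSS} has $\F_2$-basis the monomials $\wt x\prod_iU_i^{k_i}$ with $\wt x\in\wt{\mc G}$ and $k_i\ge 0$ — that is, exactly the underlying set of $\mc G$ — with homological grading the Maslov grading $M$ and auxiliary grading the Alexander grading $A$, in agreement with the conventions fixed above (including the bidegree $(-2,-1)$ assigned to each $U_i$). So the chain group $C_i$ of \hyperref[defn:chaingt]{Definition \ref*{defn:chaingt}} is the degree-$i$ piece of the grid complex, and, since elements of distinct Alexander grading are incomparable in $\mc G$, the two complexes split over $A$ identically.

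The substantive step is matching the differentials. The grid differential sends a generator $\wt x$ to $\sum_{\wt y}\sum_{R}\big(\prod_iU_i^{n_{O_i}(R)}\big)\,\wt y$, the inner sum over empty rectangles $R\in\mc R(\wt x,\wt y)$ with $n_{X_i}(R)=0$ for all $i$, and is extended $U$-equivariantly; evaluated on $x=\wt x\prod_iU_i^{k_i}$ it is $\sum y$ over those $y$ with $\mc R(x,y)\neq\varnothing$. So it suffices to show $\mc R(x,y)\neq\varnothing$ precisely when $y\leftarrow x$ in $\mc G$. If $y\leftarrow x$, the unique positive domain $D$ connecting $x$ to $y$ has $\mu(D)=M(x)-M(y)=1$ by the grading formula, hence by \hyperref[lem:maslovone]{Lemma \ref*{lem:maslovone}} it is an empty rectangle, necessarily lying in $\mc R(x,y)$. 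Conversely, if $R\in\mc R(x,y)$ then $y\prec x$, and any intervening $y\prec z\prec x$ would yield positive domains $D_1\in\mc D(x,z)$ and $D_2\in\mc D(z,y)$, both nonzero since $z\notin\{x,y\}$, with $D_1+D_2=R$ because $\mc D(x,y)$ has at most one element; but \hyperref[lem:grading]{Lemma \ref*{lem:grading}} then expresses each $D_j$ as a nonempty sum of empty rectangles, so $\mu(D_j)\ge 1$, whence $\mu(R)=\mu(D_1)+\mu(D_2)\ge 2$, contradicting $\mu(R)=1$. Thus $\del x=\sum_{y\leftarrow x}y$ coincides with the grid differential.

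Having identified $C(\mc G)$ with the grid complex of \cite{CMPOSS} as bigraded $\F_2$-complexes, its homology is $\HFK^{-}(S^3,K;\F_2)$ by the main theorem of \cite{CMPOSS}. The only genuine obstacle is the differential comparison of the previous paragraph — equivalently, a careful proof of the assertion behind \hyperref[lem:gt]{Lemma \ref*{lem:gt}} that the covering relations of $\mc G$ are indexed by the $X$-avoiding empty rectangles — and the argument above reduces it to \hyperref[lem:maslovone]{Lemma \ref*{lem:maslovone}}, \hyperref[lem:grading]{Lemma \ref*{lem:grading}}, and additivity of the Maslov index under composition of domains, the last being immediate from the grading formula recorded above.
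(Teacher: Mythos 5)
The paper offers no proof here at all: the theorem is simply attributed to \cite{CMPOSS}, following the earlier definitions that set up $\mc{G}$, the covering relations, and the chain complex $C(\mc{G})$. What you have done is the verification the paper leaves entirely implicit — that $C(\mc{G})$, defined combinatorially from the poset, is literally the $\F_2$ grid complex $GC^-$ of \cite{CMPOSS} — and that verification is correct and worth having, since the theorem as stated is about $C(\mc{G})$ rather than directly about the complex of \cite{CMPOSS}. Your matching of the underlying bigraded modules is right, and the converse direction of your differential comparison (an empty rectangle $R\in\mc{R}(x,y)$ forces a cover relation, via uniqueness of elements of $\mc{D}(x,y)$, \hyperref[lem:grading]{Lemma \ref*{lem:grading}}, and additivity of $\mu$) is clean and is exactly the nonroutine point.

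One small logical wrinkle: in the forward direction you invoke $\mu(D)=M(x)-M(y)=1$ for a cover $y\leftarrow x$. If you are taking \hyperref[lem:gt]{Lemma \ref*{lem:gt}} as a black box, this is fine, but your closing paragraph frames the argument as ``a careful proof of the assertion behind Lemma \ref*{lem:gt} that the covering relations of $\mc{G}$ are indexed by the $X$-avoiding empty rectangles'' — and $M(x)-M(y)=1$ across covers \emph{is} precisely the grading-assignment half of that assertion, so as written it is circular. The non-circular route is the one you already use for the converse: by \hyperref[lem:grading]{Lemma \ref*{lem:grading}} the nontrivial positive domain $D$ decomposes as $\sum_{i=1}^k D_i$ with $D_i\in\mc{R}(\wt u_{i-1},\wt u_i)$; the partial sum $D_1$ produces an element $u_1=\wt u_1\prod_i U_i^{k_i+n_{O_i}(D_1)}$ with $y\preceq u_1\prec x$, and the remaining nontrivial positive domain $D\sm D_1\in\mc{D}(u_1,y)$ forces $y\prec u_1$; hence $k\ge 2$ would contradict $y\leftarrow x$, so $k=1$ and $D$ is an empty rectangle. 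With that substitution the argument is self-contained, and the rest of your reduction to the main theorem of \cite{CMPOSS} goes through as you wrote it.
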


\section{Shellability}\label{sec:gss}

Let $G$ be a grid diagram of index $n$ drawn on a torus $T$, which represents a knot $K$. Let $\mc{G}$ be the associated GT poset.  We will show that each closed interval in $\mc{G}$ is EL-shellable.

Draw a circle $l$ which is disjoint from all the $\beta$ circles and intersects each $\alpha$ circle exactly once.
To an empty rectangle $R\in\mathcal{R}(x,y)$, we associate a triple $(s(R),i(R),t(R))$ in the following way: $s(R)$ is $0$ if $R$ intersects $l$ and is $1$ otherwise; if $s(R)=0$, $i(R)$ is the minimum number of $\beta$ circles that we have to cross to travel from $l$ to the leftmost arc of $R$, going left throughout; if $s(R)=1$, $i(R)$ is the minimum number of $\beta$ circles that we have to cross to go from $l$ to the leftmost arc of $R$, going right throughout; in both the cases, while counting the number of intersections, we include the leftmost $\beta$ arc of $R$; the number $t(R)$ always denotes the thickness of the empty rectangle $R$, which is the number of vertical annuli that $R$ hits.  The set of such triples is ordered lexicographically, and thus we have a map from the set of all covering relations to a totally ordered set. \hyperref[fig:trefoilrect]{Figure \ref*{fig:trefoilrect}} shows the grid diagram from \hyperref[fig:trefoil]{Figure \ref*{fig:trefoil}}, along with three generators $\wt{x}$, $\wt{y}$ and $\wt{z}$, represented by the white squares, the white circles and the black circles, respectively; the line $l$ is the dotted line; two empty rectangles $R_1\in\mc{R}(\wt{x},\wt{y})$ and $R_2\in\mc{R}(\wt{x},\wt{z})$ are shown; $(s(R_1),i(R_1),t(R_1))=(0,2,2)$ and $(s(R_2),i(R_2),t(R_2))=(1,2,1)$.  

\begin{figure} 
\psfrag{x1}{$X_1$}
\psfrag{x2}{$X_2$}
\psfrag{x3}{$X_3$}
\psfrag{x4}{$X_4$}
\psfrag{x5}{$X_5$}
\psfrag{o1}{$O_1$}
\psfrag{o2}{$O_2$}
\psfrag{o3}{$O_3$}
\psfrag{o4}{$O_4$}
\psfrag{o5}{$O_5$}
\psfrag{r1}{$R_1$}
\psfrag{r2}{$R_2$}
\psfrag{l}{$l$}
\begin{center}
\includegraphics[width=170pt]{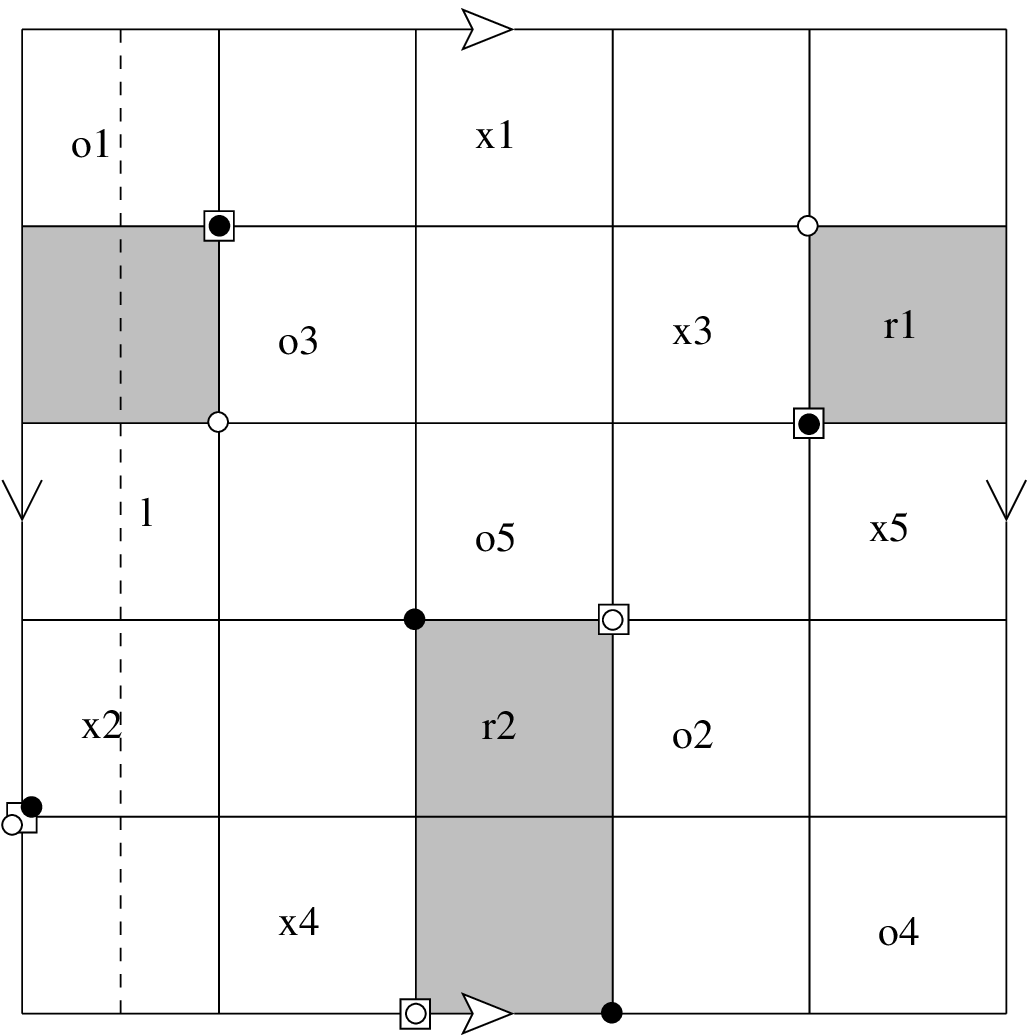}
\end{center}
\caption{Empty rectangles $R_1\in\mc{R}(\wt{x},\wt{y})$ and $R_2\in\mc{R}(\wt{x},\wt{z})$.}\label{fig:trefoilrect}
\end{figure}

\begin{thm}\label{thm:main}
 Let $x,y\in\mathcal{G}$. The map which sends a covering   relation represented by an \hyperref[emptyrectangle]{empty rectangle} $R$ to $(s(R),i(R),t(R))$   induces an \hyperref[el]{EL-shelling} on the closed interval $[y,x]$.
\end{thm}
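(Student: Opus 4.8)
The plan is to verify the defining property of an EL-shelling directly: for every closed interval $[y,x]$ in $\mc{G}$, I must produce a maximal chain from $y$ to $x$ whose label tuple is increasing, show it is unique, and show that among all maximal chains from $y$ to $x$ its label tuple is lexicographically smallest. Since maximal chains from $y$ to $x$ correspond to ways of decomposing the unique positive domain $D\in\mc{D}(x,y)$ into a sequence of empty rectangles (by \hyperref[lem:grading]{Lemma \ref*{lem:grading}}, together with the at-most-one-element fact about $\mc{D}(x,y)$), the combinatorial object I am really shelling is the set of rectangle-decompositions of a single positive domain $D$. The Maslov index computation $M(\wt{x})-M(\wt{y})=\mu(D)-2\sum_i n_{O_i}(D)$ guarantees every such decomposition has the same length $n-1=\mu(D)$, so $[y,x]$ is graded; local thinness was already recorded in \hyperref[lem:gt]{Lemma \ref*{lem:gt}}.

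First I would set up the bookkeeping for the label $(s(R),i(R),t(R))$. The key point is that $s$ and $i$ together measure "horizontal position", reading the $\beta$-circles in the cyclic order determined by $l$: starting at $l$ and sweeping one way we encounter rectangles with $s=0$ in order of increasing $i$, then (passing the far side) rectangles with $s=1$ in order of increasing $i$. So the label is, roughly, "leftmost $\beta$-arc, read in a fixed linear order around the torus" refined by thickness $t$. I would then identify the would-be lexicographically smallest chain: peel off empty rectangles from $D$ greedily, always choosing among all empty rectangles currently contained in $D$ the one with the smallest $(s,i,t)$. The heart of the argument is to show (a) this greedy process never gets stuck — i.e., an empty rectangle contained in $D$ with the minimal available label can always be removed leaving a positive domain, which follows from a local analysis like the one in the proof of \hyperref[lem:grading]{Lemma \ref*{lem:grading}} (a minimal rectangle with a given bottom-left $\wt{x}$-corner is empty); and (b) the resulting chain has an \emph{increasing} label tuple, and conversely any chain with increasing labels must coincide with it, giving uniqueness.

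The main obstacle, and the step I would spend the most care on, is the exchange/commutation argument showing this greedy chain is lex-smallest among \emph{all} maximal chains from $y$ to $x$, together with the stronger EL-condition: whenever two maximal chains agree on an initial segment and then diverge, the one continuing with the smaller label is not larger. Concretely, if $R$ and $R'$ are the two empty rectangles summing to a thin sub-interval $(w,v)$ of length $2$ (the two decompositions of a "generalized rectangle" domain of Maslov index $2$), I must show their labels $(s(R),i(R),t(R))$ and $(s(R'),i(R'),t(R'))$ are always comparable in a way compatible with lex-order on longer chains — essentially that the two rectangles occupying a thickened annulus or an L-shaped/rectangular region have distinguishable horizontal positions, so that doing the "left" one first always yields the smaller tuple. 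I would handle this by classifying the Maslov-index-$2$ positive domains into the standard cases (disjoint rectangles, rectangles sharing a corner, rectangles whose union is a bigger rectangle, and the annular "wraparound" case), and checking the inequality on $(s,i,t)$ in each case; the thickness coordinate $t$ is exactly what breaks ties in the degenerate configurations where the leftmost $\beta$-arcs coincide. Once the index-$2$ case is in hand, the general EL-condition follows by the usual local-to-global principle for EL-shellings, since any lexicographic failure could be localized to a length-$2$ sub-interval.
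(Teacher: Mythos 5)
Your high-level outline tracks the paper's proof closely up to a point: you correctly identify the lex-minimal chain with the greedy rectangle-peeling, note that Lemma~\ref{lem:grading} guarantees the peeling never gets stuck, and the Maslov-index-$2$ classification (the ``four $\beta$-circle'' disjoint case and the ``three $\beta$-circle'' hexagon case) is essentially what the paper uses to prove that the lex-minimal chain is increasing (Lemma~\ref{lem:secondary}). So far so good.

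The gap is in your final step, where you assert that ``the general EL-condition follows by the usual local-to-global principle for EL-shellings, since any lexicographic failure could be localized to a length-$2$ sub-interval.'' No such reduction exists, and this is exactly where the real work lies. The EL-condition requires that the increasing chain be \emph{unique} in $[y,x]$, and that it be the lex-minimum. If two increasing maximal chains diverge at some $z$ via rectangles $R$ and $R'$, there is in general no element $w$ with $g(w)=g(z)+2$ through which both chains pass; equivalently, $R+R'$ need not be a positive subdomain of $D$ (for instance $R$ and $R'$ may start at the same coordinate of $z$), so the divergence does not live inside any length-$2$ interval. The paper's Lemma~\ref{lem:mainproof} handles this with a genuinely global argument: it shows by induction on the length of $[z,x]$ that \emph{every} maximal chain in $[z,x]$ must, at some stage, use a rectangle whose $(s,i)$ (and then $t$) is the minimum available --- using the line $l$ and the notion of admissible coordinates when $s=0$, and the far-side count when $s=1$ --- and hence an \emph{increasing} chain is forced to use it \emph{first}. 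Repeating this forces the increasing chain uniquely. Your proposal needs this forcedness-by-induction argument; the length-$2$ exchange analysis alone will not produce it.
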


Note that the interval $[y,x]$ is non-empty if and only if there is a positive domain in $\mathcal{D}(x,y)$.  From now on, we only consider that case. Recall from \hyperref[sec:posets]{Section   \ref*{sec:posets}} that given a maximal chain $\mf{m}=\{y\leftarrow z_1\leftarrow\cdots\leftarrow z_m\leftarrow x\}$ in $[y,x]$, we associate to it the labeling $((s,i,t)(y\leftarrow z_1),\ldots, (s,i,t)(z_m\leftarrow x))$, where $(s,i,t)(p\leftarrow q)$ is the $(s,i,t)$-triple associated to the empty rectangle corresponding to the covering relation $p\leftarrow q$. Also note that given $z\in\mathcal{G}$ and a triple $(s,i,t)$, there is at most one element $z'\in\mc{G}$ covering $z$, such that the covering relation corresponds to that triple.  Thus, no two maximal chains in $[y,x]$ have the same labeling. Therefore, there is a unique maximal chain $\mf{m}_0$ for which the labeling is lexicographically the minimum. The following two lemmas prove the above theorem.

\begin{lem}\label{lem:secondary}
 The lexicographically minimum labeling is an increasing labeling.
\end{lem}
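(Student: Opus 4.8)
The plan is to argue by contradiction: suppose the lexicographically minimum maximal chain $\mf{m}_0 = \{y \leftarrow z_1 \leftarrow \cdots \leftarrow z_m \leftarrow x\}$ has a labeling $((s,i,t)_1, \ldots, (s,i,t)_{m+1})$ which is not increasing, so there is an index $j$ with $(s,i,t)_j > (s,i,t)_{j+1}$. The two consecutive covering relations $z_{j-1} \leftarrow z_j \leftarrow z_{j+1}$ (with the convention $z_0 = y$, $z_{m+1} = x$) lie inside the closed interval $[z_{j-1}, z_{j+1}]$ of length $2$, whose underlying positive domain $D = R \cup R'$ is the sum of the two empty rectangles $R \in \mathcal{R}(z_{j-1}, z_j)$ and $R' \in \mathcal{R}(z_j, z_{j+1})$. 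By local thinness (the interval $(z_{j-1}, z_{j+1})$ is thin), there is exactly one other element $z_j'$ in this interval, giving the second decomposition $D = \bar R \cup \bar R'$ with $\bar R \in \mathcal{R}(z_{j-1}, z_j')$, $\bar R' \in \mathcal{R}(z_j', z_{j+1})$. Replacing $z_j$ by $z_j'$ produces another maximal chain $\mf{m}_0'$ in $[y,x]$ differing from $\mf{m}_0$ only in position $j$, with labeling identical to that of $\mf{m}_0$ except that the pair $((s,i,t)_j, (s,i,t)_{j+1})$ is replaced by $((s,i,t)(\bar R), (s,i,t)(\bar R'))$.

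The heart of the argument is then a purely local claim about length-$2$ intervals: if a positive domain $D$ of Maslov index $2$ connecting $z_{j-1}$ to $z_{j+1}$ decomposes in two ways as a sum of two empty rectangles, $D = R \cup R' = \bar R \cup \bar R'$, then one of the two decompositions has its $(s,i,t)$-labeling strictly increasing, and moreover that decomposition is the lexicographically smaller of the two ordered pairs $((s,i,t)(R),(s,i,t)(R'))$ versus $((s,i,t)(\bar R),(s,i,t)(\bar R'))$. Granting this claim, if $(s,i,t)_j > (s,i,t)_{j+1}$ then the decomposition through $z_j$ is \emph{not} the increasing one, hence not the lexicographically smaller one, so the pair from $z_j'$ is strictly lexicographically smaller; substituting it into $\mf{m}_0$ produces a maximal chain whose labeling is lexicographically strictly smaller than that of $\mf{m}_0$ (they first differ at position $j$), contradicting the minimality of $\mf{m}_0$. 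Therefore $\mf{m}_0$ has no descent and its labeling is increasing.

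To verify the local claim I would classify the two ways an index-$2$ positive domain breaks into empty rectangles. There are essentially two geometric cases: (i) the two rectangles $R, R'$ are disjoint (occupy four distinct rows and columns, so $D$ is a disjoint union of two rectangles), in which case the alternative decomposition is just the same two rectangles composed in the opposite order, and exactly one of the two orderings sorts the two $(s,i,t)$-triples increasingly — and that ordering is automatically the lexicographically smaller ordered pair; (ii) the rectangles share a row or a column (the classic ``overlapping rectangles'' configuration of the grid chain complex), in which case $D$ is an L-shaped or staircase region, $z_j$ and $z_j'$ differ, and one checks by inspecting how $l$ and the $\beta$-circles meet the two sub-rectangles that the two pairs of $(s,i,t)$-triples are comparable and the increasing one is the smaller one. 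I expect case (ii), the bookkeeping of the $s$-, $i$- and $t$-coordinates under the two different subdivisions of an overlapping-rectangle domain, to be the main obstacle; the key observations will be that the $t$-coordinate (thickness) behaves additively/predictably across the split, that the $s$- and $i$-coordinates are governed by the position of the leftmost vertical arc of each sub-rectangle relative to $l$, and that when the two triples happen to be equal the degenerate case cannot actually occur because the covering relations out of $z_j$ would then coincide.
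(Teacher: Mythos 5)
Your proposal follows essentially the same route as the paper: argue by contradiction at a descent, use local thinness to flip the middle element of the offending length-$2$ interval, and split into the ``disjoint rectangles'' (four distinct $\beta$-circles) case, where a swap does the job, versus the ``overlapping/hexagon'' (three $\beta$-circles) case, where one must check the $(s,i,t)$-bookkeeping directly. The paper handles the hexagon case by enumerating the possible shapes and positions of $l$ (its Figure~\ref{fig:cuthexagon}) exactly as your plan suggests; you have left that enumeration as a to-do rather than carrying it out, but the strategy and the key local claim are the same as the paper's.
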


\begin{proof} Assume not. Let $\mf{m}_0$ be the unique maximal chain   whose labeling is lexicographically the minimum. Let $p_1\leftarrow   p_2\leftarrow p_3$ be the first place in $\mf{m}_0$ where the   labeling decreases. Let $R_1$ and $R_2$ be the two empty rectangles   corresponding to the two covering relations.  Since each vertical   annulus and each horizontal annulus has at least one $X$ marking,   $\partial (R_1+R_2)$ is non-zero on on exactly three or exactly four   $\beta$ circles.

\begin{figure} 
\begin{center}
\includegraphics[width=330pt]{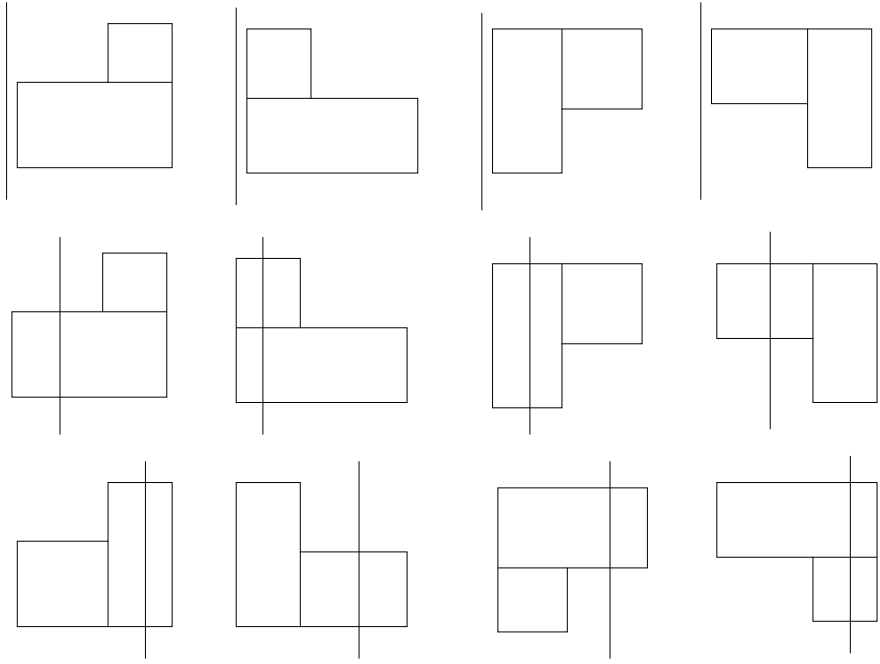}
\end{center}
\caption{The lexicographically smallest way to cut a hexagon.}\label{fig:cuthexagon}
\end{figure}

If $\del(R_1+R_2)$ is non-zero on exactly four $\beta$ circles, then switch $R_1$ and $R_2$, thereby producing a new maximal chain whose labeling is smaller than the labeling for $\mf{m}_0$ and thus contradicting the assumption that the labeling for $\mf{m}_0$ was the minimum. If, on the other hand, $\partial (R_1+R_2)$ is non-zero on exactly three $\beta$ circles, then $R_1+R_2$ looks like a hexagon. Depending on the shape of the hexagon and the position of the line $l$, only the cases as shown in \hyperref[fig:cuthexagon]{Figure   \ref*{fig:cuthexagon}} can occur. In each of the cases, the lexicographically smallest way to divide the hexagon is shown, and in each case, that happens to correspond to a chain where the labeling is increasing.  This proves that the labeling for the maximal chain $\mf{m}_0$ is increasing.
\end{proof}

\begin{lem} \label{lem:mainproof}
If the labeling corresponding to a maximal chain is an increasing labeling, then that maximal chain is the one whose labeling is lexicographically the minimum.
\end{lem}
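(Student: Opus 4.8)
The plan is to prove the contrapositive-flavored uniqueness statement directly: given a maximal chain $\mf{m}$ from $y$ to $x$ whose labeling $L(\mf{m})$ is increasing, I want to show $L(\mf{m}) \leq L(\mf{m}')$ lexicographically for every maximal chain $\mf{m}'$ in $[y,x]$. Combined with \hyperref[lem:secondary]{Lemma \ref*{lem:secondary}} (which produces \emph{an} increasing labeling, namely the lex-minimum one) and the observation already made in the text that no two maximal chains share a labeling, this forces $\mf{m} = \mf{m}_0$, giving the lemma. So the real content is: an increasing labeling is automatically lex-minimal.

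First I would set up an exchange/straightening argument. Suppose $\mf{m}$ has increasing labeling but there is some $\mf{m}'$ with $L(\mf{m}') < L(\mf{m})$ lexicographically. Look at the first position where the two chains diverge: write $\mf{m}$ as $y \leftarrow z_1 \leftarrow \cdots$ and $\mf{m}'$ as $y \leftarrow z_1 \leftarrow \cdots \leftarrow z_{j-1} \leftarrow z_j' \leftarrow \cdots$, agreeing up through $z_{j-1}$ but with the $j$-th covering relation of $\mf{m}'$ carrying a strictly smaller triple than the $j$-th covering relation of $\mf{m}$. Now I restrict attention to the closed interval $[z_{j-1}, x]$ (using \hyperref[thm:el]{Theorem \ref*{thm:el}}-style locality is not needed here; I just work inside this subinterval, which is again a grid subposet). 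Within it, the first rectangle of $\mf{m}$, call it $R$, has a triple that is \emph{not} minimal among triples of covering relations out of $z_{j-1}$ inside $[z_{j-1},x]$, since $\mf{m}'$ exhibits a smaller one, $R'$. The goal is to derive a contradiction with the assumption that $L(\mf{m})$ is increasing, by showing that I can modify $\mf{m}$ to start with the smaller rectangle $R'$ while keeping (weakly) smaller labels throughout — i.e., a local surgery on the chain that lowers the label at position $j$ without raising any later label above its successor, contradicting that $R$'s label was part of an increasing sequence. The cleanest way to run this is by induction on the length of $[y,x]$: reduce to showing that if a maximal chain starts with a rectangle $R$ whose triple is not the smallest available triple of a covering relation out of $y$, then its labeling is not increasing.

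The heart of that reduction is the hexagon analysis already done in \hyperref[lem:secondary]{Lemma \ref*{lem:secondary}} and \hyperref[fig:cuthexagon]{Figure \ref*{fig:cuthexagon}}, but applied in the other direction. Concretely: let $R \in \mc{R}(y,z_1)$ be the first rectangle of our increasing chain, and suppose for contradiction some other covering relation $y \leftarrow z_1^*$ inside $[y,x]$ has a strictly smaller triple, realized by a rectangle $R^*$. I need to produce a rectangle $R^*$ which can be "pushed past" $R$: either $R$ and $R^*$ are disjoint (up to their $\wt{y}$-coordinates) so that $R^*$ followed by the complementary rectangle is a legal initial segment of some chain, or $R \cup R^*$ fills a hexagon and Figure \ref*{fig:cuthexagon} tells us the lex-smallest cut starts with a rectangle whose triple is $\leq$ that of $R^*$, hence strictly $<$ that of $R$ — and crucially the second rectangle in that lex-smallest cut has a triple that is still $\leq$ the triple of the second rectangle $z_1 \leftarrow z_2$ of $\mf{m}$, because Figure \ref*{fig:cuthexagon} shows the increasing cut is the only increasing one and the original $R$ was the \emph{larger} first step. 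This last comparison — controlling the label at position $2$ after the swap — is exactly where the explicit hexagon casework is indispensable, and it is the step I expect to be the main obstacle: one must check in every case of Figure \ref*{fig:cuthexagon} (all hexagon shapes, all positions of $l$) that performing the swap does not create a descent later, i.e., that the two rectangles of the lex-minimal cut of the hexagon have triples that are termwise dominated by $(s,i,t)(R)$ and $(s,i,t)(z_1\leftarrow z_2)$ respectively.

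Finally I would close the induction: after the swap, the new chain $\mf{m}^*$ agrees with $\mf{m}$ from $z_2$ onward, starts with a strictly smaller label, and has its first two labels $\leq$ the first two of $\mf{m}$. If $\mf{m}^*$ were increasing we would contradict uniqueness of increasing labelings on shorter intervals (or simply contradict $L(\mf{m})$ being increasing, since we produced a lex-smaller competitor and $\mf{m}$ was assumed increasing while $\mf{m}_0$ is also increasing — two distinct increasing chains is impossible once we know \hyperref[lem:secondary]{Lemma \ref*{lem:secondary}}). In fact the slickest packaging avoids separate treatment: granting \hyperref[lem:secondary]{Lemma \ref*{lem:secondary}}, the lex-minimum chain $\mf{m}_0$ is increasing; if some \emph{other} chain $\mf{m}$ were also increasing, then at the first position where $\mf{m}$ and $\mf{m}_0$ differ, $\mf{m}_0$ has the strictly smaller label, so $\mf{m}$'s label there is larger, yet both sequences are increasing — the hexagon/exchange lemma above, applied to the subinterval $[z_{j-1},x]$, shows the larger-first-step chain cannot be increasing, a contradiction. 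Hence $\mf{m}=\mf{m}_0$, proving the lemma.
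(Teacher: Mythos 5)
Your overall plan — reduce to showing that an increasing labeling is automatically lex-minimal, locate the first divergence $z_{j-1}$, and argue inside $[z_{j-1},x]$ that the first step of an increasing chain is forced — matches the paper's skeleton. But the argument you give for that forcing step has a genuine gap. You want to take the lex-smaller rectangle $R^*\in\mc{R}(z_{j-1},z_j^*)$ and ``push it past'' the first one or two rectangles $R, R_2$ of $\mf{m}$, using the disjoint-swap or hexagon cases. This requires $R^*$ to be one of the two rectangles appearing in the other decomposition of the Maslov-index-$2$ domain $R+R_2$, i.e.\ that $R^*$ is literally contained in $R+R_2$ as a $2$-chain. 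There is no reason for this: $R^*$ is merely some rectangle out of $z_{j-1}$ contained in the full domain $D\in\mc{D}(x,z_{j-1})$, and it may lie in a part of $D$ that the chain $\mf{m}$ does not reach until much later (or that gets covered by a completely different sequence of rectangles). Moreover, $R$ and $R^*$ are both based at $z_{j-1}$ and may share a coordinate, in which case $R+R^*$ is not even a domain between generators, so ``$R\cup R^*$ fills a hexagon'' is not a well-posed case. The hexagon casework of \hyperref[fig:cuthexagon]{Figure~\ref*{fig:cuthexagon}} is about two \emph{consecutive} rectangles, and establishes only a \emph{local} lex-minimality (each adjacent pair in an increasing chain is the lex-smaller cut of its Maslov-$2$ block); it does not by itself propagate to the global statement you need.

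The paper handles this with a different, intrinsically global argument. Working in $[z,x]$ with domain $D\in\mc{D}(x,z)$, it determines the minimal triple $(s_0,i_0,t_0)$ among rectangles covering $z$ inside $[z,x]$, and then proves by induction on the length of $[z,x]$ that \emph{every} maximal chain must at some point use a rectangle with triple $\le(s_0,i_0,t_0)$ (the inductions for $s$, $i$, and $t$ in both the $s=0$ and $s=1$ cases, using admissible coordinates and Figures~\ref*{fig:thinrect1}--\ref*{fig:thinrect0}). The point is that the region of $D$ witnessing the minimal triple must eventually be consumed by any decomposition into rectangles, even if the minimal rectangle itself never literally appears. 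Since an increasing labeling has its smallest entry first, this forces the first covering relation of any increasing chain to realize the minimal triple, so two increasing chains cannot diverge. To repair your argument you would essentially have to prove this same ``eventually small'' claim; a purely local exchange within $R+R_2$ is not enough.
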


\begin{proof} 
  Let $\mf{m}$ be a maximal chain whose labeling is increasing, and   let $\mf{m}_0$ be the unique maximal chain whose labeling is   lexicographically the minimum. We want to show that   $\mf{m}=\mf{m}_0$.

Starting at $y$, let us assume that $\mf{m}_1$ and $\mf{m}_2$ agree up to an element $z\in\mc{G}$.  Let $D$ be the unique positive domain in $\mathcal{D}(x,z)$. Let $\mf{m}_1=\mf{m}_0\cap[z,x]$ and let $\mf{m}_2=\mf{m}\cap[z,x]$. Let $R$ and $R'$ be the empty rectangles corresponding to the two covering relations on $z$ coming from the two chains $\mf{m}_1$ and $\mf{m}_2$.  We will show that $(s(R),i(R),t(R))=(s(R'),i(R'),t(R'))$ which would imply that $R=R'$; that, in turn would imply that $\mf{m}$ and $\mf{m}_0$ agree for at least one more generator, thus concluding the proof.

    Now, if $D$ does not intersect $l$, then $s$ is forced to be     $1$. On the other hand, if $D$ does intersect $l$, then eventually     in both $\mf{m}_1$ and $\mf{m}_2$, some covering relation will     have $s=0$, and since the labelings in both $\mf{m}_1$ and     $\mf{m}_2$ are increasing, they both must start with     $s=0$. Therefore, we see that $s$ is fixed.

    First we analyze the case when $s=1$. So assume that the whole     domain $D$ lies to the right of $l$, and let $i_0$ be the minimum     number of $\beta$ circles we have to cross to reach $D$ from $l$     going right throughout.  Clearly $i$, the second coordinate in the     triple $(s,i,t)$, can never be smaller than $i_0$. Furthermore,     since the whole domain $D$ has to be used up in both the chains     $\mf{m}_1$ and $\mf{m}_2$, so at some point, $i$ will be equal to     $i_0$. Since the labelings in both $\mf{m}_1$ and $\mf{m}_2$ are     increasing, this fixes $i=i_0$.

To see that $t$ is also fixed, we need an induction statement. Look at all $p$ of the form $z\leftarrow p\preceq x$, such that the covering relation $z\leftarrow p$ is by an empty rectangle with $i=i_0$. Let $R_0$ be the thinnest empty rectangle among them and let $t_0$ be the thickness of $R_0$. Our induction claim states: in any maximal chain in $[z,x]$, at some point we have to use an empty rectangle with $i=i_0$ and $t\leq t_0$. The induction is done on the length of the interval $[z,x]$.  Clearly when this length is $2$, the statement is true.  Let us assume that we do not start with the thinnest empty rectangle, but rather start with an empty rectangle $R_1$. Neither $R_0$, nor $R_1$ contains any coordinate of $z$ in its interior, and hence the local diagram must look like \hyperref[fig:thinrect1]{Figure \ref*{fig:thinrect1}}. Since the Maslov index of $D\setminus R_1$ is one lower than that of $D$, and since it has a starting empty rectangle with $(i,t)=(i_0,t_0)$, induction applies finishing the proof.

\begin{figure}
\psfrag{l}{$l$}
\psfrag{i0}{$i_0$}
\psfrag{t0}{$t_0$}
\psfrag{r0}{$R_0$}
\psfrag{tr0}{$R_1$}
\psfrag{z1}{$z_1$}
\psfrag{z2}{$z_2$}
\psfrag{z3}{$z_3$}
\begin{center} \includegraphics[width=170pt]{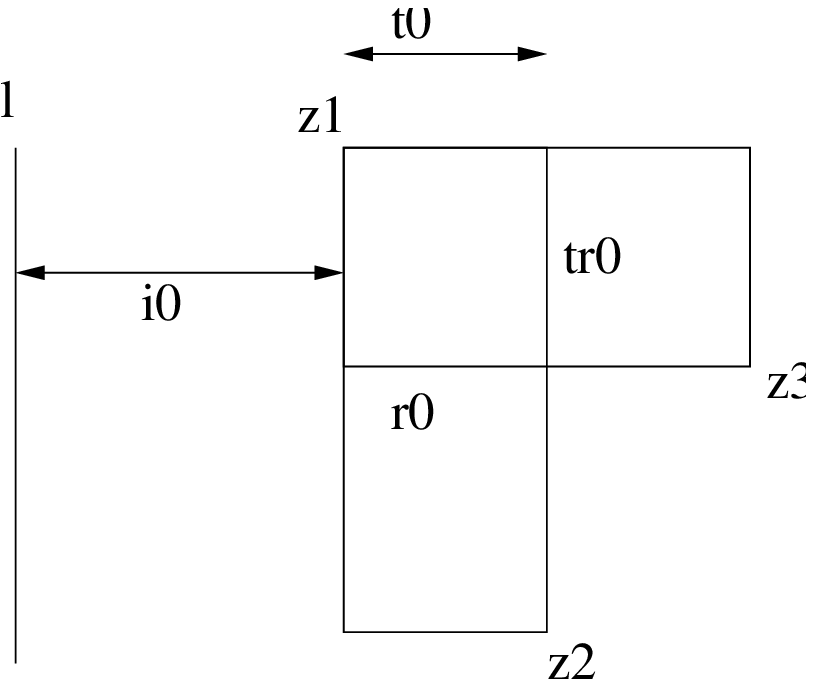}
\end{center}
\caption{Fixing the thickness of the starting empty rectangle when $s=1$.}\label{fig:thinrect1}
\end{figure}

 Thus in both the chains $\mf{m}_1$ and $\mf{m}_2$, at some point we have to use an empty rectangle with $i=i_0$ and $t\leq t_0$.  But since the labelings in both $\mf{m}_1$ and $\mf{m}_2$ are increasing, and $(i_0,t_0)$ is the smallest value of $(i,t)$ that we can start with, we have to start with $t=t_0$. Thus, this fixes $t$.

Now, let us assume that $s=0$. We need an induction statement to show that $i$ is fixed.  For each coordinate $z_i$ of $z$, consider the horizontal line segment $h_i$ lying on some $\alpha$ curve, which starts at $z_i$ and ends at $l$ and goes right throughout. We call $z_i$ admissible if every point just below the line segment $h_i$ belongs to $D$.  Since the starting empty rectangles in the chains $\mf{m}_1$ and $\mf{m}_2$ have $s=0$, there is at least one admissible coordinate. Among all the admissible coordinates, let $z_1$ be the one with $h_i$ having the smallest length. Let $i_0$ be the smallest length, measured by number of intersections with $\beta$ curves. Our induction claim states: in any maximal chain in $[z,x]$, at some point we have to use an empty rectangle with $s=0$ and $i\leq i_0$.  The induction is done on the length of $[z,x]$. Clearly when the length is $2$, the claim is true.  Let us assume that we start with an empty rectangle $R_0$ with $s=0$ and $i>i_0$. Since $R_0$ has Maslov index one, so it cannot contain any $z$ coordinate in its interior, and it also cannot contain any horizontal annulus. Therefore, it is easy to see that $R_0$ has to be disjoint from $h_1$, and thus $D\setminus R_0$ has Maslov index one lower than $D$ and still intersects $l$ and has an admissible coordinate with $h=i_0$. Thus induction applies, and proves our claim. It is obvious that the starting empty rectangles in the chains $\mf{m}_1$ and $\mf{m}_2$ must have $s=0$ and $i\geq i_0$.  Since both have increasing labelings, we must start with empty rectangles with $(s,i)=(0,i_0)$.

Next, we want to show that $t$ is also fixed. This is also done by an induction very similar to the ones above. Consider all $p$ with $z\leftarrow p\preceq x$, such that the covering relation $z\leftarrow p$ has $(s,i)=(0,i_0)$. Let $R_0$ be the thinnest empty rectangle among all such covering relations, and let $t_0$ be the thickness of $R_0$.  The induction claim states: in any maximal chain in $[z,x]$, at some point we have to use an empty rectangle with $(s,i)=(0,i_0)$ and $t\leq t_0$, and the induction is done on the length of $[z,x]$. Once again, it is trivial when the length is $2$. Assume that we start with a empty rectangle $R_1$ with $(s,i)=(0,i_0)$ and $t>t_0$.  The empty rectangles $R_0$ and $R_1$ must look like \hyperref[fig:thinrect0]{Figure \ref*{fig:thinrect0}}.

\begin{figure}
\psfrag{l}{$l$}
\psfrag{i0}{$i_0$}
\psfrag{t0}{$t_0$}
\psfrag{r0}{$R_0$}
\psfrag{tr0}{$R_1$}
\psfrag{z1}{$z_1$}
\psfrag{z2}{$z_2$}
\psfrag{z3}{$z_3$}
\begin{center} \includegraphics[width=170pt]{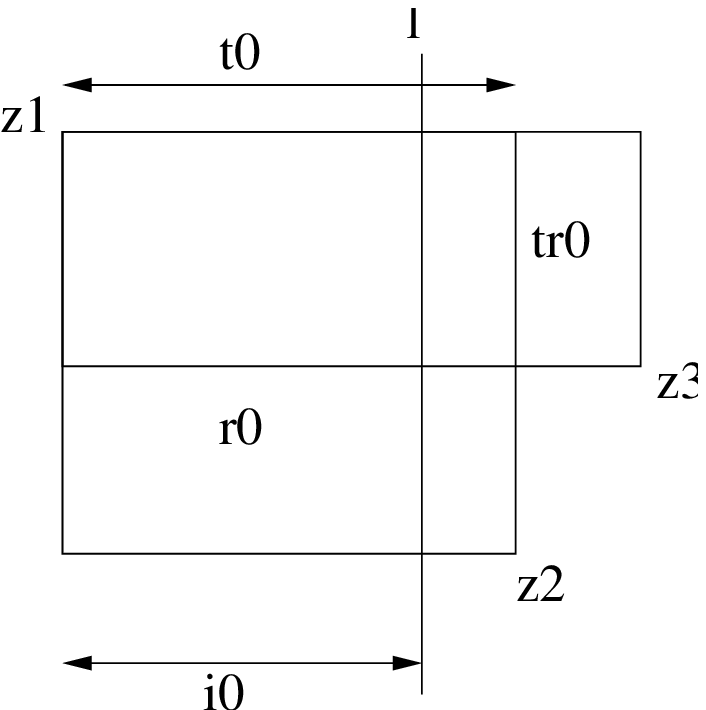}
\end{center}
\caption{Fixing the thickness of the starting empty rectangle when $s=0$.}\label{fig:thinrect0}
\end{figure}

Note that $D\setminus R_1$ has Maslov index one lower than $D$ and it still intersects $l$, and it still has an admissible coordinate with $h=i_0$. Thus induction applies. Since the labelings for $\mf{m}_1$ and $\mf{m}_2$ are both increasing, this implies that they both must start with an empty rectangle with $(s,i,t)=(0,i_0,t_0)$. Thus we see that the thickness is fixed.  As explained earlier, this finishes the proof.
\end{proof}

\begin{proof}[Proof of Theorem \ref*{thm:main}]
  Fix a closed interval $[y,x]$. By \hyperref[lem:mainproof]{Lemma \ref*{lem:mainproof}}, there is at most one maximal chain in $[y,x]$, namely the one whose labeling is lexicographically the minimum, for which the the labeling is increasing. \hyperref[lem:secondary]{Lemma \ref*{lem:secondary}} tells us that the lexicographically minimum labeling is also an increasing labeling. Therefore, there is a unique maximal chain in $[y,x]$ for which the labeling is increasing, and that labeling is lexicographically the minimum.
\end{proof}

\section{Flow category}

Inspired by the definition of the flow category in \cite{generalRCJJGS}, let us define a \emph{PL flow  category} $\mc{C}$ to be a small category with the following additional structures and properties: there is a grading assignment $g:\Ob_{\mc{C}}\rightarrow \Z$; for any $x\in\Ob_{\mc{C}}$, the set $\Mor_{\mc{C}}(x,x)$ consists of only the identity; for any two distinct $x,y\in\Ob_{\mc{C}}$, the set $\Mor_{\mc{C}}(x,y)$ is a (possibly empty) $(g(x)-g(y)-1)$-dimensional PL-manifold (possibly with boundary); for any $x,y,z\in\Ob_{\mc{C}}$, the map $\Mor_{\mc{C}}(x,y)\times\Mor_{\mc{C}}(y,z)\rightarrow\Mor_{\mc{C}}(x,z)$ is a PL-embedding, such that if $x,y,z$ are distinct, then the image of the embedding is a subspace of the boundary of $\Mor_{\mc{C}}(x,z)$; furthermore, if $p\in\Mor_{\mc{C}}(x,z)$ is a point on the boundary, then there exists $y\in\Ob_{\mc{C}}\sm\{x,z\}$, such that $p$ is in the image of the embedding $\Mor_{\mc{C}}(x,y)\times\Mor_{\mc{C}}(y,z)\hookrightarrow\Mor_{\mc{C}}(x,z)$.

In this section, given a GT poset $P$ whose every closed interval is shellable, we will construct a PL flow category $\mc{F}(P)$. There will also be a natural functor from $\mc{F}(P)$ to $\mc{C}(P)$, the category associated to the poset $P$.

The objects of $\mc{F}(P)$ are the elements of $P$, and the grading assignment on $\Ob_{\mc{F}(P)}$ is simply the grading assignment of $P$. The space $\Mor_{\mc{F}(P)}(x,y)$ is non-empty if and only if $y\preceq x$. If $y\prec x$, consider the interval $[y,x]$; let $B([y,x])$ be its barycentric subdivision, and now consider the interval $[\{y,x\},\infty)$ in $B([y,x])$; the space $\Mor_{\mc{F}(P)}(x,y)$ is defined to be its order complex, $X([\{y,x\},\infty))$. Let $z\prec y\prec x$; before we define the structure map $\Mor_{\mc{F}(P)}(x,y)\times\Mor_{\mc{F}(P)}(y,z)\rightarrow \Mor_{\mc{F}(P)}(x,z)$, observe that the Cartesian product of the poset $[\{y,x\},\infty)$, viewed as an interval in $B([y,x])$, and the poset $[\{z,y\},\infty)$, viewed as an interval in $B([z,y])$, is naturally isomorphic to the poset $[\{z,y,x\},\infty)$, viewed as an interval in $B([z,x])$; the structure map is the composition $X([\{y,x\},\infty))\times X([\{z,y\},\infty))=X([\{y,x\},\infty)\times[\{z,y\},\infty))= X([\{z,y,x\},\infty))\hookrightarrow X([\{z,x\},\infty))$.

\begin{thm}
For any \hyperref[gtposet]{GT poset} $P$ whose every closed interval is \hyperref[shellable]{shellable}, the category $\mc{F}(P)$ is a PL flow category. Furtheremore, whenever $y\prec x$, the space $\Mor_{\mc{F}(P)}(x,y)$ is PL-homeomorphic to a ball.
\end{thm}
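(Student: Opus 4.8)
The plan is to verify the four defining properties of a PL flow category one at a time, and then establish the ``ball'' claim, which in fact delivers most of the structural properties for free. The grading assignment and the condition $\Mor_{\mc{F}(P)}(x,x)=\{\Id\}$ are immediate from the construction. The substantive work is: (i) identify $\Mor_{\mc{F}(P)}(x,y)$ as a PL manifold of the correct dimension with the correct boundary; (ii) check that the structure maps are PL-embeddings onto a subspace of the boundary; and (iii) check the surjectivity-onto-boundary condition, i.e.\ every boundary point of $\Mor_{\mc{F}(P)}(x,z)$ comes from some composition.

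For step (i), the key observation is that the poset $[\{y,x\},\infty)$ inside $B([y,x])$ is exactly the poset of chains of the open interval $(y,x)$; equivalently, its order complex is the barycentric subdivision of $X((y,x))$. Now I would invoke the hypothesis together with \hyperref[thm:shellbasic]{Theorem \ref*{thm:shellbasic}}: since $[y,x]$ is shellable, so is $(y,x)$ (it is an interval of $[y,x]$) and so is $B([y,x])$, hence the interval $[\{y,x\},\infty)$ of $B([y,x])$ is shellable. Because $\mc{G}$, or any GT poset, is locally thin, every interval of the form $(y,x)$ is thin, so the poset $[\{y,x\},\infty)$ is subthin, with its thin part being the submaximal chains covered by two maximal chains. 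Applying \hyperref[thm:ordercomplex]{Theorem \ref*{thm:ordercomplex}} to the shellable subthin poset $[\{y,x\},\infty)$ shows its order complex is a PL ball; its dimension is one less than the length of $[y,x]$, which is $g(x)-g(y)$, so the ball has dimension $g(x)-g(y)-1$, as required. The same theorem identifies the boundary sphere as the subcomplex spanned by those submaximal chains of $(y,x)$ that are covered by exactly one maximal chain. This simultaneously proves the final sentence of the theorem (``$\Mor_{\mc{F}(P)}(x,y)$ is PL-homeomorphic to a ball'') and gives an explicit combinatorial description of the boundary that I will use in step (iii).

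For step (ii), the construction already presents the structure map as a composition of two PL homeomorphisms $X([\{y,x\},\infty))\times X([\{z,y\},\infty))\cong X\bigl([\{y,x\},\infty)\times[\{z,y\},\infty)\bigr)\cong X([\{z,y,x\},\infty))$ followed by the inclusion $X([\{z,y,x\},\infty))\hookrightarrow X([\{z,x\},\infty))$ induced by the inclusion of posets $[\{z,y,x\},\infty)\hookrightarrow[\{z,x\},\infty)$. The only thing to check is that the image lands in the boundary of the ball $X([\{z,x\},\infty))$: a chain of $(z,x)$ containing $y$ is a submaximal chain of $(z,x)$ (by local thinness and gradedness, inserting $y$ between $z$ and $x$ uses up one ``slot''), and such a chain, together with all chains containing it, lies on the boundary precisely because the boundary is the subcomplex spanned by chains that can be refined in only one way --- and chains through $y$ are supported on the boundary. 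I would make this precise using the explicit boundary description from step (i): a maximal chain of $(z,x)$ refining a given chain through $y$ is determined on $[z,y]$ and on $[y,x]$ separately, and the injectivity/embedding property follows from the Cartesian product identification of posets.

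For step (iii), the remaining condition, I must show that every boundary point $p$ of $\Mor_{\mc{F}(P)}(x,z)=X([\{z,x\},\infty))$ lies in the image of some composition. By step (i), $p$ lies in a simplex corresponding to a chain $\mf{c}$ of $(z,x)$ that, when extended to a submaximal chain $\mf{c}'$, is covered by exactly one maximal chain. I would argue that such a $\mf{c}'$ must contain some element $y$ with $z\prec y\prec x$ that is ``forced'' --- more carefully, that the subcomplex of chains covered by exactly one maximal chain is covered by the subcomplexes $X([\{z,y,x\},\infty))$ as $y$ ranges over elements strictly between $z$ and $x$. This is where shellability and local thinness must be combined: a submaximal chain of $(z,x)$ missing one element between some consecutive pair $u\leftarrow w$ in the chain extends in exactly two ways unless that gap is at the very bottom or top --- and one shows combinatorially that the unique-extension submaximal chains are exactly those whose ``missing slot'' forces a particular $y$. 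I expect \textbf{this last step to be the main obstacle}: it requires a clean structural lemma about which submaximal chains of a thin shellable poset lie on the boundary, phrased so that it identifies the boundary with the union of the composition images, rather than just with an abstract subcomplex. Once that lemma is in hand, combining it with the boundary description from \hyperref[thm:ordercomplex]{Theorem \ref*{thm:ordercomplex}} closes the argument, and the functor $\mc{F}(P)\to\mc{C}(P)$ sending each morphism space to the unique morphism of $\mc{C}(P)$ is then evidently well-defined and compatible with composition.
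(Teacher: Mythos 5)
Your approach to steps (i) and (ii) matches the paper's proof: establish subthinness from local thinness, shellability of $[\{y,x\},\infty)$ via \hyperref[thm:shellbasic]{Theorem \ref*{thm:shellbasic}} applied twice (first to pass to $B([y,x])$, then to the closed interval), and then invoke \hyperref[thm:ordercomplex]{Theorem \ref*{thm:ordercomplex}}, which also hands you the explicit boundary description. However, you flag step (iii) --- that every boundary point of $\Mor_{\mc{F}(P)}(x,z)$ comes from some composition --- as ``the main obstacle'' requiring ``a clean structural lemma about which submaximal chains of a thin shellable poset lie on the boundary,'' and you leave it open. In fact this is the easiest of the three steps and needs no extra combinatorics. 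The paper's argument: take a maximal-dimensional simplex $\Delta$ of the boundary of $X([\{z,x\},\infty))$ containing $p$, corresponding to a submaximal chain $C$ of $[\{z,x\},\infty)$. By the step-(i) boundary description, $C$ does not contain $\{z,x\}$. Since $[\{z,x\},\infty)$ is graded with $\{z,x\}$ its unique bottom element, the minimum of $C$ must cover $\{z,x\}$, hence equals $\{z,y,x\}$ for some $z\prec y\prec x$. Therefore $C\subseteq[\{z,y,x\},\infty)$, so $\Delta\subseteq X([\{z,y,x\},\infty))$ is in the image of the composition map, and $p$ along with it. In other words, the boundary already decomposes as $\bigcup_{y\in(z,x)}X([\{z,y,x\},\infty))$ for free.

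One small inaccuracy in your step (ii): your parenthetical claim that ``a chain of $(z,x)$ containing $y$ is a submaximal chain of $(z,x)$'' is false as stated (a singleton $\{y\}$ is rarely submaximal). The correct, and cleaner, reason the image of the structure map lies in the boundary is simply that every element of $[\{z,y,x\},\infty)\subset B([z,x])$ is a chain of $[z,x]$ containing $y$, hence is never equal to $\{z,x\}$; so all simplices of $X([\{z,y,x\},\infty))$ correspond to chains in $[\{z,x\},\infty)$ avoiding $\{z,x\}$, which is exactly the boundary condition.
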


\begin{proof}
  Let $y\prec x$. We will prove that   $\Mor_{\mc{F}(P)}(x,y)=X([\{y,x\},\infty))$ is PL-homeomorphic to   the $(g(x)-g(y)-1)$-dimensional ball. Since the interval $[y,x]$ is   graded with length $g(x)-g(y)+1$, the interval $[\{y,x\},\infty)$ in   $B([y,x])$ is graded with length $g(x)-g(y)$.  Since $P$ is locally   thin, the interval $(y,x)$ is thin, and therefore, the interval   $[\{y,x\},\infty)$ in $B([y,x])$ is subthin; furthermore, a   submaximal chain of $[\{y,x\},\infty)$ is covered by exactly one   maximal chain if and only if it does not contain $\{y,x\}$. Finally,   since the interval $[y,x]$ is shellable, we know from   \hyperref[thm:shellbasic]{Theorem \ref*{thm:shellbasic}} that the   interval $[\{y,x\},\infty)$ is shellable. Therefore,   \hyperref[thm:ordercomplex]{Theorem \ref*{thm:ordercomplex}} applies   and tells us that the order complex $X([\{y,x\},\infty))$ is   PL-homeomorphic to the $(g(x)-g(y)-1)$-dimensional ball;   furthermore, the boundary of the ball corresponds precisely to the   submaximal chains of $[\{y,x\},\infty)$ that do not contain   $\{y,x\}$.

Now let $z\prec y\prec x$. Next, we will prove that the map $\Mor_{\mc{F}(P)}(x,y)\times\Mor_{\mc{F}(P)}(y,z)\rightarrow \Mor_{\mc{F}(P)}(x,z)$ is a PL-embedding into the boundary of $\Mor_{\mc{F}(P)}(x,z)$. The map is an embedding because it is essentially the inclusion of $X([\{z,y,x\},\infty))$ as a subcomplex of $X([\{z,x\},\infty))$. Its image lies in the boundary of $\Mor_{\mc{F}(P)}(x,z)$ because none of the chains in $[\{z,y,x\},\infty)$ contain the point $\{z,x\}$.

Finally, let us prove that every point in the boundary of $\Mor_{\mc{F}(P)}(x,z)$ is in the image of such an embedding. Let $p$ be a point in the boundary. Let $\Delta$ be a maximal dimensional simplex in the boundary of $X([\{z,x\},\infty))$ that contains $p$. Let $C$ be the submaximal chain in $[\{z,x\},\infty)$ that corresponds to $\Delta$. Since $\Delta$ lies in the boundary, $C$ does not contain the element $\{z,x\}$. Therefore, the smallest element of $C$ is some element of the form $\{z,y,x\}$, with $z\prec y\prec x$. Then $p$ lies in the image of the map $\Mor_{\mc{F}(P)}(x,y)\times\Mor_{\mc{F}(P)}(y,z)\rightarrow \Mor_{\mc{F}(P)}(x,z)$.
\end{proof}

Therefore, using \hyperref[lem:gt]{Lemma \ref*{lem:gt}}, \hyperref[thm:main]{Theorem \ref*{thm:main}} and \hyperref[thm:el]{Theorem \ref*{thm:el}}, we can associate a PL flow category to a grid diagram in a natural way. This suggests that we might be able to associate a stable homotopy type to a grid diagram in a natural way, whose homology will be the knot Floer homology. However, in order to apply the Cohen-Jones-Segal machinery \cite{generalRCJJGS}, we need to frame the tangent bundles of $\Mor_{\mc{F}(P)}$ in a coherent way: namely, for every $y\prec x$, we want a trivialization of the bundle $T_*(\Mor_{\mc{F}(P)}(x,y))\oplus\R$ over $\Mor_{\mc{F}(P)}(x,y)$, such that whenever $z\prec y\prec x$, the trivialization of the bundle $T_*(\Mor_{\mc{F}(P)}(x,y))\oplus\R\oplus T_*(\Mor_{\mc{F}(P)}(y,z))\oplus\R$ over $\Mor_{\mc{F}(P)}(x,y)\times\Mor_{\mc{F}(P)}(y,z)$ induces the trivialization of the bundle $T_*(\Mor_{\mc{F}(P)}(x,z))\oplus\R$ over the image of the inclusion $\Mor_{\mc{F}(P)}(x,y)\times\Mor_{\mc{F}(P)}(y,z)\hookrightarrow\Mor_{\mc{F}(P)}(x,z)$.

It is not clear what is the relevant notion of the tangent bundle of a PL-manifold. More importantly, it is not clear how to produce these coherent framings starting with a GT poset whose every closed interval is shellable. It seems likely that to do so we need more structures on the poset. It would be an interesting endeavor to characterize those extra structures, and to check if the GT posets arising from grid diagrams have them.

\bibliography{homotopynew}

\end{document}